\theoremstyle{definition}
\theoremstyle{definition}
\theoremstyle{definition}
\tikzstyle arrowstyle=[scale=1]
\tikzstyle directed=[postaction={decorate,decoration={markings,
    mark=at position .65 with {\arrow[arrowstyle]{stealth}}}}]
\tikzstyle reverse directed=[postaction={decorate,decoration={markings,
    mark=at position .65 with {\arrowreversed[arrowstyle]{stealth};}}}]
\newtheorem{thrm}{Theorem}[section]
\newtheorem{lemm}[thrm]{Lemma}
\newtheorem{re}[thrm]{Remark}
\begin{document}

\begin{frontmatter}

\title{Multirate iterative scheme with multiphysics finite element method for a fluid-saturated poroelasticity\tnoteref{1}}\tnotetext[1]{The work is supported by the National Natural Science Foundation of China under grant No.11971150, the major projects of international science and technology cooperation Henan University under grant No. 2021ybxm07 and the cultivation project of first class subject of Henan University under grant No. 2019YLZDJL08.\\
*Corresponding author. Email: zhihaoge@henu.edu.cn }
\author{Zhihao G${\rm e^{1,2 *}}$,\ Xiangzi F${\rm u^{1}}$}
\address{$ ^1$ School of Mathematics and Statistics, Henan University, Kaifeng 475004, People's Republic of China\\
$^2$ Henan Engineering Research Center for Artificial Intelligence Theory and Algorithms, Henan University, Kaifeng 475004, People's Republic of China}


\begin{abstract}
In this paper, we propose a multirate iterative scheme with multiphysics finite element method for a fluid-saturated poroelasticity model. Firstly, we reformulate the original model into a fluid coupled problem to apply the multiphysics finite element method for the discretization of the space variables, and we design a multirate iterative scheme on the time scale which solve a generalized Stokes problem in the coarse time size and solve the diffusion problem in the finer time size according to the characteristics of the poroelasticity problem. Secondly, we prove that the multirate iterative scheme is stable and the numerical solution satisfies some energy conservation laws, which are important to ensure the uniqueness of solution to the decoupled computing problem. Also, we analyze the error estimates to prove that the proposed numerical method doesn't reduce the precision of numerical solution and greatly reduces the computational cost. Finally, we give the numerical tests to verify the theoretical results and draw a conclusion to summary the main results in this paper.
\end{abstract}
\begin{keyword}
Poroelasticity, multiphysics finite element methods, multirate iterative scheme.
\end{keyword}
\end{frontmatter}

\thispagestyle{empty}

\numberwithin{equation}{section}
\section{Introduction}
Poromechanic is a branch of continuum mechanics and acoustics, which is a fluid-solid interaction system at pore scale. If the solid is an elastic material, then the subject of the study is known as poroelasticity. In this paper, we study the behavior of a fluid-saturated poroelasticity model as follows:
\begin{eqnarray}
-{\rm div}\sigma({\bf u})+\alpha\nabla p={\bf f} & \mbox{in }  \Omega_{T}:=\Omega\times (0,T)\subset \mathbb{R}^{d}\times (0,T),\label{eq-6}\\
(c_{0}p+\alpha {\rm div} {\bf u})_{t} + {\rm div} {\bf v}_{f} =\phi & \mbox{in }  \Omega_{T}.\label{eq-7}
\end{eqnarray}
Here $\sigma(\bf{u})$ is called the (effective) stress tensor, and defined by 
\begin{eqnarray}
\sigma(\mathbf{u}):~=\mu \varepsilon(\mathbf{u})+\lambda {\rm {\rm div}}\mathbf{u}\textit{I}, 
\end{eqnarray}
where $\varepsilon(\mathbf{u})=\frac{1}{2}(\nabla \mathbf{u}+\nabla\mathbf{u}^T)$ and ${\bf v}_{f}$ is the volumetric solvent flux and is the well-known Darcy's law and given by
\begin{eqnarray}
\mathbf{v}_f:~=-\frac{K}{\mu_f}(\nabla p-\rho_f \mathbf{g}). \label{eq-9}
\end{eqnarray}
In addition, $\Omega \subset \mathbb{R}^{d}(d=2,3)$ is a bounded polygonal domain with the boundary $\partial\Omega$,  ${\bf u}$ denotes the displacement vector of the solid and\ $p$ denotes the pressure of the solvent, $\bf{f}$ is the body force, $I$ denotes the\ $d\times d$ identity matrix and $\varepsilon({\bf u})$ is known as the strain tensor. The parameters in the above model list as follows: Lam\'e constants\ $\lambda$ and \ $\mu$; the permeability tensor\ $K=K(x)$ which is assumed to be symmetric and uniformly positive definite in the sense that there exist positive constants\ $K_{1}$ and \ $K_{2}$ such that\ $K_{1}|\zeta|^{2}\leq K(x)\zeta\cdot \zeta \leq K_{2} |\zeta|^{2}$ for a.e.  $x\in\Omega$ and any\ $\zeta\in \mathbb{R}^{d}$; the solvent viscosity\ $\mu_{f}$, Biot-Willis constant $\alpha$, and the constrained specific storage coefficient $c_{0}$. Also, we denote $\hat{\sigma}({\bf u},p):=\sigma({\bf u})-\alpha p I$ by the total stress tensor.

To close the problem (\ref{eq-6})-(\ref{eq-7}), we impose the following boundary and initial conditions:
\begin{eqnarray}
\widehat{\sigma}(\mathbf{u},p)\mathbf{n}=\sigma(\mathbf{u})\mathbf{n}-\alpha pI\mathbf{n}=\mathbf{f}_1& \quad \mathrm{on}\ & \partial\Omega_T:~=\partial\Omega\times(0,T),\\ \label{eq-10}
\mathbf{v}_f\cdot\mathbf{n}=-\frac{K}{\mu_f}(\nabla p-\rho_f \mathbf{g})\cdot \mathbf{n}=-\phi_1& \quad \mathrm{on}\ & \partial\Omega_T,\\ \label{eq-11}
\mathbf{u}=\mathbf{u_0},~~~p=p_0& \quad \mathrm{in}\ & \Omega\times\{t=0\}.\label{eq-12}
\end{eqnarray}

We remark that the Lam\'e constant\ $\mu$ is also called the  shear modulus and denoted by\ $G$, and\ $B:=\lambda+\frac{2}{3} G$ is called the  bulk modulus. \ $\lambda, \mu$ and\ $B$ are computed from the  Young's modulus\ $E$ and the Poisson ratio\ $\nu$ by the following formulas:
\[
\lambda=\frac{E\nu}{(1+\nu)(1-2\nu)},\qquad \mu=G=\frac{E}{2(1+\nu)}, \qquad
B=\frac{E}{3(1-2\nu)}.
\]

The problem (\ref{eq-6})-(\ref{eq-12}) is widely applied  to many fields such as biomedical and chemical systems, environmental and reservoir engineering, for the details, one can refer to  \cite{M.Doi2010,T.Tanaka1979,K.Terzaghi1943,H.Byrne2003, C.C.Swan2003,coussy04,J.Rutqvist2003,R.Rajapakse1993,G.A.Behie2001, P.J.Phillips200711(2):131-144} and the  references therein.

Since the problem (\ref{eq-6})-(\ref{eq-12}) and its solution domain are very complicated, it is difficult to find the analytical solution of the problem. Therefore, the finite element method are usually applied to approximate the poroelastic problem. In recent years, many scholars have proposed different finite element methods to study the poroelasticity model, the main difficulty is ``locking'' phenomenon, for the details, one can see \cite{P.J.Phillips200913(1):5-12}.  The authors of \cite{P.J.Phillips200711(2):145-158, P.J.Phillips200711(2):131-144} proposed and analyzed a semi-discrete and a fully discrete mixed finite element method which simultaneously approximate the pressure and its gradient along with the displacement vector field, based on the same or similar idea, some stabilized finite element methods are designed, one can see \cite{Hu2017,JJLee2016,JJLee2018} and the therein references. To prevent ``locking'' phenomenon and reveal the multi physical processes, the authors of \cite{X.B.Feng2010,GeGuan2018} proposed fully discrete finite element method to describe the expansion dynamics of polymer gels under mechanical constraints. Later, the authors of \cite{X.B.Feng2014} proposed multiphysics finite element method for poroelasticity model, the key idea of multiphysics finite element method is to reconstruct the original poroelastic problem by introducing two pseudo-pressure fields, which makes the original problem is decoupled into two sub-problems at each time step. For the coupled fluid problem, the generalized Stokes problem changes slow and the diffusion problem changes fast along with time, in order to study this phenomenon, the authors of \cite{Kumar2} propose a multirate iterative scheme based on a mixed formulation. Ge and Ma in \cite{Z.H.Ge2018} proposed a multirate iterative scheme based on the multiphysics discontinuous Galerkin method and gave the optimal convergent order estimates, which is very different from the method of \cite{Kumar2}. However, the multiphysics discontinuous Galerkin method has a large amount of computation. In this paper, in order to reduce the computational while retaining the precision, we use the multiphysics finite element method for the discretization of the space variables and adopt a multirate iterative scheme on the time scale which solve a generalized Stokes problem in the coarse time size and solve the diffusion problem in the finer time size. And we prove that the multirate iterative scheme is stable and the numerical solution satisfies some energy conservation laws. Also, we prove that it doesn't reduce the precision of numerical solution and compared with the  the multiphysics discontinuous  Galerkin method, the multirate iterative scheme based on the multiphysics finite element method greatly reduces computation time.

The remainder of this paper is organized as follows. In Section \ref{sec2}, we present the reconstruction of poroelasticity model and the preliminary knowledge which is needed to study the poroelasticity problem. In Section \ref{sec3}, we propose and analyze the multirate iterative scheme with the multiphysics finite element method for the reformulated model, and give the optimal error estimates. In Section \ref{sec4}, we give some numerical examples to verify theoretical results. Finally, we draw a conclusion to summary the main results in this paper.

\numberwithin{equation}{section}
\section{Multiphysics reformulation of poroelasticity model}\label{sec2}
To reveal the multi physical processes and propose an effective numerical method, we introduce the new variable $q={\rm div} {\bf u}$ and denote $\eta=c_{0}p+\alpha q, \xi=\alpha p -\lambda q$, it is easy to check that
\begin{equation}
p=\kappa_{1} \xi + \kappa_{2} \eta,  q=\kappa_{1} \eta-\kappa_{3} \xi,\label{eq-13}
\end{equation}
where
\begin{equation}\label{eq-14}
\kappa_{1}= \frac{\alpha}{\alpha^{2}+\lambda c_{0}},
\quad \kappa_{2}=\frac{\lambda}{\alpha^{2}+\lambda c_{0}}, \quad
\kappa_{3}=\frac{c_{0}}{\alpha^{2}+\lambda c_{0}}.
\end{equation}

Using (\ref{eq-13}), we reformulate  (\ref{eq-6})-(\ref{eq-9}) into the following system:
\begin{eqnarray}
-\mu {\rm div}\varepsilon(\mathbf{u})+\nabla\xi=\mathbf{f} &&\quad \mathrm{in}\ \Omega_T, \label{eq-15}\\
\kappa_3\xi+{\rm {\rm div}}\mathbf{u}=k_1\eta &&\quad \mathrm{in}\ \Omega_T, \label{eq-16}\\
\eta_t- \frac{1}{\mu_f}{\rm {\rm div}}[K(\nabla(k_1\xi+k_2\eta)-\rho_f \mathbf{g})]=\phi &&\quad \mathrm{in}\ \Omega_T. \label{eq-17}
\end{eqnarray}
The boundary and initial conditions (\ref{eq-10})-(\ref{eq-12}) can be rewritten as
\begin{eqnarray}
\sigma(\mathbf{u})\mathbf{n}-\alpha (\kappa_1\xi+\kappa_2\eta)I\mathbf{n}=\mathbf{f}_1&& \quad \mathrm{on}\ \partial\Omega_T:=\partial\Omega\times (0,T),\label{eq-18}\\
-\frac{K}{\mu_f}(\nabla(\kappa_1\xi+\kappa_2\eta)-\rho_f \mathbf{g})\cdot \mathbf{n}=\phi_1&& \quad \mathrm{on}\  \partial\Omega_T,\label{eq-19}\\
\mathbf{u}=\mathbf{u_0},~~~p=p_0&& \quad \mathrm{in}\  \Omega\times\{t=0\}.\label{eq-20}
\end{eqnarray}
\begin{re}\label{rem200131-1}
From the problem (\ref{eq-15})-(\ref{eq-20}), we know that  $(\mathbf{u}, \xi)$ satisfies the generalized Stokes problem of the displacement vector field along with the pseudo-pressure field, and $\eta$ satisfies the diffusion problem of the pseudo-pressure field. Thus, the problem (\ref{eq-15})-(\ref{eq-20}) reveals the multiphysics deformation and diffusion process.
\end{re}
\begin{re}\label{rem200131-2}
In the problem (\ref{eq-15})-(\ref{eq-20}), the original variable $p$ is no longer the primary variable, but it is calculated as a linear combination of\ $\xi$ and\ $\eta$, which  can be  updated by (\ref{eq-13}), just because of this, it is possible to design an effective numerical method without ``locking'' phenomenon for the pressure of $p$.
\end{re}

Next, we introduce some function spaces. For any Banach space $B$, we let ${\bf B}=[B]^{d}$, and denote $\bf{B}'$ by its dual space,
and $\|\cdot\|_{L^{p}(B)}$ is a shorthand notation for $\|\cdot\|_{L^{p}((0, T); B)}$.  And we denote  $(\cdot,\cdot)$ and $\langle \cdot,\cdot\rangle$ by the standard $L^2(\Omega)$ and $L^2(\partial\Omega)$ inner products, respectively. Also, we need to introduce the function spaces:
\begin{align*}
&L^{2}_{0}(\Omega):=\{q\in L^{2}(\Omega); (q,1)=0\}, {\bf X}:= {\bf H}^{1}(\Omega).
\end{align*}
Denote $
{\bf RM}:=\{{\bf r}={\bf a}+{\bf b}\times x;{\bf a}, {\bf b}, x\in \mathbb{R}^{d}\}$
by the space of infinitesimal rigid motions. From  \cite{S.C.Brenner2008}, it is well known that\ $\bf{RM}$ is the kernel of the strain operator $\varepsilon$, that is,\ $\bf{r} \in \bf{RM}$ if and only if\ $\varepsilon(\bf{r})=0$. Hence, we have
\begin{equation}\label{eq-2}
\varepsilon({\bf r})={\bf 0}, {\rm div} {\bf r}=0 \qquad\forall \bf{r} \in \bf{RM.}
\end{equation}
Let ${\bf L}_{\bot}^{2}(\partial\Omega)$ and ${\bf H}_{\bot}^{1}(\Omega)$ denote respectively the subspaces of ${\bf L}^{2}(\partial\Omega)$ and ${\bf H}^{1}(\Omega)$ which are orthogonal to ${\bf RM}$, that is,
\begin{align*}
&{\bf H}^{1}_{\bot}(\Omega):=\{{\bf v}\in {\bf H}^{1}(\Omega);({\bf v},{\bf r})=0\qquad\forall {\bf r}\in {\bf RM}\},\\
&{\bf L}_{\bot}^{2}(\partial\Omega):=\{{\bf g}\in{\bf L}^{2}(\partial\Omega);\langle {\bf g},{\bf r}\rangle=0\qquad
\forall {\bf r}\in {\bf RM}\}.
\end{align*}

As for the existence and uniqueness of weak solution of the problem (\ref{eq-15})-(\ref{eq-20}), one can refer to \cite{X.B.Feng2014}, here we omit the details.

\numberwithin{equation}{section}
\section{Multirate iterative scheme with multiphysics finite element method}\label{sec3}

In this section, we propose the finite element solution\ $(\mathbf{u}_{h},\xi_{h},\eta_{h })$ of the problem ~(\ref{eq-15})-(\ref{eq-20}). We use the multiphysics finite element method for the discretization of the space variables and adopt a multirate iterative scheme on the time scale, which solve a generalized Stokes problem in the coarse time size and solve the diffusion problem in the finer time size.

\subsection{Multirate iterative scheme}

Let $\mathcal{T}_{h}$ be a quasi-uniform triangulation or rectangular partition of\ $\Omega$ with mesh size\ $h$, and\ $\overline{\Omega}=\bigcup_{K\in\mathcal{T}_{h}}\overline{K}$. The time interval $[0,T]$ is divided into $N$ equal intervals, denoted by $[t_{n-1}, t_{n}], n=1,2,...N$,  and $\Delta t=\frac{T}{N}$, then $t_n=n\Delta t$.


In this paper, we use the following Taylor-Hood element:
\begin{eqnarray*}
{\bf X}_{h} =\{{\bf v}_{h}\in {\bf C}^{0}(\overline{\Omega});{\bf v}_{h}|_{K}\in {\bf P}_{2}(K)~~\forall K\in \mathcal{T}_{h} \},\\
M_{h} =\{\varphi_{h}\in C^{0}(\overline{\Omega}); \varphi_{h}|_{K}\in P_{1}(K)
~~\forall K\in \mathcal{T}_{h} \}.
\end{eqnarray*}

Finite element approximation space\ $W_{h}$ for\ $\eta$ variable can be chosen
independently, any piecewise polynomial space is acceptable provided that
$W_{h} \supset M_{h}$. The most convenient choice is\ $W_{h} =M_{h}$.

Recall that $\bf{RM}$ denotes the space of the infinitesimal rigid motions, evidently,\ ${\bf RM}\subset {\bf X}_h$. We define
\begin{equation}\label{eq-22}
{\bf V}_{h}=\{{\bf v}_{h}\in {\bf X}_h;({\bf v}_{h},{\bf r})=0,\forall {\bf r}\in {\bf RM} \}.
\end{equation}

It is easy to check that ${\bf X}_{h}={\bf V}_{h}\bigoplus {\bf RM}$. From \cite{X.B.Feng2010}, we know that there holds the following of inf-sup condition:
\begin{equation}\label{eq-23}
\sup_{{\bf v}_{h}\in{\bf V}_{h}}\frac{({\rm div}{\bf v}_{h},\varphi_{h})}{{\|{\bf v}_{h}\|}_{H^{1}(\Omega)}}
\geq \beta_{1} \|\varphi_{h}\|_{L^{2}(\Omega)} \quad \forall \varphi_{h}\in M_{0h}, \quad \beta_{1}>0.
\end{equation}

Next, we
define the bilinear forms as follows:
\begin{equation}\label{eq-24}
a({\bf u},{\bf v})= \mu(\varepsilon({\bf u}),\varepsilon({\bf v})),
\end{equation}
\begin{equation}\label{eq-25}
b({\bf v},\xi)=-({\rm div} {\bf v},\xi),
\end{equation}
\begin{equation}\label{eq-26}
c(\xi,\varphi)=k_{3}(\xi,\varphi).
\end{equation}

%
%

Now, we propose a multirate iterative scheme with multiphysics finite element method for the problem (\ref{eq-15})-(\ref{eq-20}) as follows:\\
(\romannumeral1) Compute ~$\mathbf{u}_{h}^{0}\in {\bf V}_{h}$ and ~ $q_{h}^{0}\in M_{h}$ by
\begin{eqnarray*}
&&\mathbf{u}_h^0=\mathcal{R}_h\mathbf{u}^0, ~~~p_h^0=\mathcal{Q}_h p_0, ~~~ q_h^0=\mathcal{Q}_h q_0~ (q_0=\mathrm{{\rm div}}\mathbf{u}_0),\\
&&\eta_h^0= c_0 p_h^0+\alpha q_h^0, ~~~ \xi_h^0=\alpha p_h^0-\lambda q_h^0,
\end{eqnarray*}
where $\mathcal{R}_h$ and $\mathcal{Q}_h$ are defined by (\ref{eq-47}) and (\ref{eq-50}), respectively.\\
(\romannumeral2) For\ $n=0, \,1, \,2, \dots$, do the following three steps:

\emph{Step~1:} Solve for\ $({\textbf{u}_h^{(n+1)m}} ,{\xi_h^{(n+1)m}}) \in {\bf V}_{h}\times M_{h}$ such that
\begin{eqnarray}
&&  a({\textbf{u}_{h}^{(n+1)m}},{\textbf{v}_{h}})
+b({\textbf{v}_{h}},{\xi_{h}^{(n+1)m}})
 =({\textbf{f}},{\textbf{v}_{h}})
 +\langle{\textbf{f}_{1}},{\textbf{v}_h}\rangle, ~\forall{\textbf{v}_{h}}\in{{\bf V}_{h}},\label{eq-28}\\
 &&-b({\textbf{u}_{h}^{(n+1)m}},{\varphi_{h}})
 +c({\xi_{h}^{(n+1)m}},{\varphi_{h}})=
k_{1}({\eta_{h}^{(n+\theta)m}},{\varphi_{h}}), \forall \varphi_{h}\in M_{h}.\label{eq-29}
\end{eqnarray}

\emph{Step 2:} For\ $k=1, \,2, \dots, m,$ solve for\ $\eta_{h}^{nm+k}\in M_{h}$ such that
\begin{eqnarray}
&&({d_{t}\eta_{h}^{nm+k}},\psi_{h})+\frac{1}{\mu_{f}}({K(\nabla(k_{1}\xi_{h}^{(n+1)m} +k_{2}\eta_{h}^{nm+k})-{\rho_{f}}\mathbf{g})},{\nabla\psi_{h}})\nonumber\\
&&=(\phi,\psi_{h})+\langle{\phi_{1}},\psi_{h}\rangle.\label{eq-30}
\end{eqnarray}

\emph{Step 3:} Update $p_{h}^{(n+1)m}$ and $q_{h}^{(n+1)m}$ by
\begin{equation}\label{eq-31}
p_{h}^{(n+1)m}=k_{1}\xi_{h}^{(n+1)m}+k_{2}\eta_{h}^{(n+\theta)m},~~ q_{h}^{(n+1)m}=k_{1}\eta_{h}^{(n+1)m}-k_{3}\xi_{h}^{(n+1)m},
\end{equation}
where $\theta=0$ or $1$, $m$ is a positive integer, $d_{t}\eta_{h}^{nm+k}=\frac{\eta_{h}^{nm+k}-\eta_{h}^{nm+k-1}}{\Delta t}$.

\subsection{Stability analysis}

\begin{lemm}\label{lma1912-1}
Let\ $\{({\textbf{u}_{h}^{(n+1)m}}, {\xi_{h}^{(n+1)m}}, {\eta_{h}^{nm+k}})\}_{n\geq0, 1\leq k \leq m}$ be the solution of the multirate iterative scheme (\ref{eq-28})-(\ref{eq-31}), then we have
\begin{equation}\label{eq-40}
J_{h,\theta}^{l+1}+S_{h,\theta}^{l}=J_{h,\theta}^{0}
 ~~~~~{\rm for}~~l\geq0,~~\theta=0,~1,
\end{equation}
where
\begin{eqnarray}\begin{aligned}\nonumber
&J_{h,\theta}^{l+1}:=\frac{1}{2}[\mu \|\varepsilon(\mathbf{u}_{h}^{(l+1)m})\|_{L^{2}(\Omega)}^{2}
+\kappa_{2}\|\eta_{h}^{(l+\theta)m}\|_{L^{2}(\Omega)}^{2}
+\kappa_{3}\|\xi_{h}^{(l+1)m}\|_{L^{2}(\Omega)}^{2}\\
&-2({\mathbf{f}},{\mathbf{u}_{h}^{(l+1)m}})-2\langle{\mathbf{f}_{1}},
{\mathbf{u}_{h}^{(l+1)m}}\rangle],\\
&S_{h,\theta}^{l}:=\Delta t \sum\limits_{n=0}^{l}\sum_{k=1}^{m}[\frac{K}{\mu_{f}}(\nabla p_{h}^{nm+k}-\rho_{f}\mathbf{g}, \nabla p_{h}^{nm+k})+\frac{\kappa_{2} \Delta t}{2}\|d_{t}\eta_{h}^{(n-1+\theta)m+k}\|_{L^{2}(\Omega)}^{2}\\
&-(\phi,p_{h}^{nm+k})-\langle{\phi_{1}},{p_{h}^{nm+k}\rangle}
-(1-\theta)\frac{\kappa_{1}  k \Delta t}{\mu_{f}}(K d_{t}^{(k)} \nabla \xi_{h}^{nm+k},{\nabla p_{h}^{nm+k}})_{L^{2}(\Omega)}] \\
&+m\Delta t \sum\limits_{n=0}^{l}( \frac{\mu m\Delta t}{2}\|d_{t}^{(m)}\varepsilon(\mathbf{u}_{h}^{(n+1)m})\|_{L^{2}(\Omega)}^{2}+\frac{\kappa_{3} m\Delta t}{2}\|d_{t}^{(m)} \xi_{h}^{(n+1)m}\|_{L^{2}(\Omega)}^{2}),
\end{aligned}\end{eqnarray}
where $d_{t}^{(m)}\eta_{h}^{nm}$ is defined by
\begin{eqnarray} d_{t}^{(m)}\eta_{h}^{nm}=\frac{\eta_{h}^{nm}-\eta_{h}^{nm-m}}{m \Delta t}.\label{eq200121-1}
\end{eqnarray}
\end{lemm}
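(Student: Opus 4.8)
The plan is to derive the energy law (\ref{eq-40}) by testing the three steps of the scheme with well-chosen discrete functions, summing over the relevant time indices, and exploiting the telescoping structure of the backward-difference operators. First I would test the generalized Stokes equations (\ref{eq-28})--(\ref{eq-29}) at level $(n+1)m$ with the \emph{discrete time increments} $\textbf{v}_h = \textbf{u}_h^{(n+1)m} - \textbf{u}_h^{nm}$ and $\varphi_h = \xi_h^{(n+1)m} - \xi_h^{nm}$. Using the symmetry of $a(\cdot,\cdot)$ and the algebraic identity $2(x-y,x) = \|x\|^2 - \|y\|^2 + \|x-y\|^2$, the terms $a(\textbf{u}_h^{(n+1)m}, \textbf{u}_h^{(n+1)m}-\textbf{u}_h^{nm})$ and $c(\xi_h^{(n+1)m},\xi_h^{(n+1)m}-\xi_h^{nm})$ generate the $\mu\|\varepsilon(\textbf{u}_h)\|^2$ and $\kappa_3\|\xi_h\|^2$ differences in $J_{h,\theta}$ together with the positive quadratic remainders $\frac{\mu}{2}\|\varepsilon(\textbf{u}_h^{(n+1)m}-\textbf{u}_h^{nm})\|^2$ and $\frac{\kappa_3}{2}\|\xi_h^{(n+1)m}-\xi_h^{nm}\|^2$, which I would rewrite in terms of $d_t^{(m)}$ using (\ref{eq200121-1}) to produce the last line of $S_{h,\theta}^l$ (note $\textbf{u}_h^{(n+1)m}-\textbf{u}_h^{nm} = m\Delta t\, d_t^{(m)}\textbf{u}_h^{(n+1)m}$). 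The $b$-terms from the two equations cancel by antisymmetry once the same increment is used, leaving the right-hand side contributions $(\textbf{f},\cdot) + \langle\textbf{f}_1,\cdot\rangle$ (which collapse into the $J_{h,\theta}$ telescope after summation) and the coupling term $k_1(\eta_h^{(n+\theta)m}, \xi_h^{(n+1)m}-\xi_h^{nm})$.

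Next I would test the diffusion equation (\ref{eq-30}) with $\psi_h = p_h^{nm+k}$ (for $k=1,\dots,m$), observing that by (\ref{eq-31}) and the definition of the $\kappa_i$'s one has $k_1\xi_h^{(n+1)m} + k_2\eta_h^{nm+k}$ playing the role of the pressure and $(d_t\eta_h^{nm+k}, p_h^{nm+k})$ reorganizes — again via $2(x-y,x)=\|x\|^2-\|y\|^2+\|x-y\|^2$ applied to the $\kappa_2\eta$ part and a careful bookkeeping of the $\kappa_1\xi$ part — into the $\kappa_2\|\eta_h\|^2$ difference, the dissipation remainder $\frac{\kappa_2\Delta t}{2}\|d_t\eta_h^{nm+k}\|^2$, and the Darcy dissipation $\frac{K}{\mu_f}(\nabla p_h^{nm+k}-\rho_f\textbf{g},\nabla p_h^{nm+k})$, plus the source terms $-(\phi,p_h^{nm+k})-\langle\phi_1,p_h^{nm+k}\rangle$. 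Here the two cases $\theta=0$ and $\theta=1$ diverge: when $\theta=1$ the coupling term $k_1(\eta_h^{(n+1)m},\xi_h^{(n+1)m}-\xi_h^{nm})$ from Step 1 matches cleanly against the $\kappa_1\xi$ contributions accumulated in Step 2 over $k=1,\dots,m$ (since $\xi_h$ is piecewise constant on the coarse grid while $\eta_h$ varies on the fine grid, the sum $\sum_k d_t\eta_h^{nm+k} = m\, d_t^{(m)}\eta_h^{(n+1)m}$ telescopes), whereas when $\theta=0$ there is a mismatch between $\eta_h^{nm}$ used in Step 1 and the $\eta_h^{nm+k}$ used in Step 2, and this leftover is precisely the extra term $-(1-\theta)\frac{\kappa_1 k\Delta t}{\mu_f}(K d_t^{(k)}\nabla\xi_h^{nm+k},\nabla p_h^{nm+k})$ appearing in $S_{h,\theta}^l$.

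Then I would add the two identities, sum over the fine index $k$ from $1$ to $m$ and over the coarse index $n$ from $0$ to $l$, and let the backward differences telescope so that only the endpoint quantities at levels $(l+1)m$ and $0$ survive on the "$J$" side while everything else is collected into $S_{h,\theta}^l$; matching terms against the stated definitions gives (\ref{eq-40}). The main obstacle I anticipate is the consistent treatment of the cross term $k_1(\eta_h,\xi_h)$ across the two time scales — keeping the coarse-grid and fine-grid indices aligned, correctly identifying which portion telescopes and which portion becomes the residual $\theta$-dependent term, and verifying that the coefficients $\kappa_1,\kappa_2,\kappa_3$ combine exactly as in (\ref{eq-13})--(\ref{eq-14}) so that $p_h$ and $q_h$ from the update formula (\ref{eq-31}) reproduce the pressure appearing in the Darcy term. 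A secondary technical point is confirming that all the quadratic remainder terms carry the right powers of $\Delta t$ and $m$ (a factor $m\Delta t$ outside versus $m\Delta t$ inside for the Step 1 remainders, a single $\Delta t$ for the Step 2 remainders), which is a matter of careful substitution of the definitions of $d_t$ and $d_t^{(m)}$ rather than any real difficulty.
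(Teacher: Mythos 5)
There is a genuine gap in your Step~1 testing. With your choice $\mathbf{v}_h=\mathbf{u}_h^{(n+1)m}-\mathbf{u}_h^{nm}$ in (\ref{eq-28}) and $\varphi_h=\xi_h^{(n+1)m}-\xi_h^{nm}$ in (\ref{eq-29}), the $b$-terms do \emph{not} cancel ``by antisymmetry'': they produce $-(\mathrm{div}(\mathbf{u}_h^{(n+1)m}-\mathbf{u}_h^{nm}),\xi_h^{(n+1)m})+(\mathrm{div}\,\mathbf{u}_h^{(n+1)m},\xi_h^{(n+1)m}-\xi_h^{nm})=(\mathrm{div}\,\mathbf{u}_h^{nm},\xi_h^{(n+1)m})-(\mathrm{div}\,\mathbf{u}_h^{(n+1)m},\xi_h^{nm})$, which neither vanishes nor telescopes. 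Likewise your coupling term $\kappa_1(\eta_h^{(n+\theta)m},\xi_h^{(n+1)m}-\xi_h^{nm})$ pairs $\eta$ with the increment of $\xi$, and matching it against the $\kappa_1$ part of $\sum_k(d_t\eta_h^{(n-1)m+k},p_h^{nm+k})$ from Step~2 would require a discrete integration by parts in time that leaves cross boundary terms of the form $\kappa_1(\eta_h,\xi_h)$, which are absent from $J_{h,\theta}$, so the identity (\ref{eq-40}) cannot be reached along the route you describe. The paper's proof keeps $\mathbf{v}_h=d_t^{(m)}\mathbf{u}_h^{(n+1)m}$ in (\ref{eq-28}) but applies the difference operator $d_t^{(m)}$ to equation (\ref{eq-29}) and then tests the \emph{differenced} equation with $\xi_h^{(n+1)m}$ (this is exactly (\ref{eq-42})): with that choice the two divergence terms cancel identically, the $c$-term still yields the $\kappa_3$ telescope plus the quadratic remainder as in (\ref{eq-46}), and the coupling appears as $\kappa_1(d_t^{(m)}\eta_h^{nm},\xi_h^{(n+1)m})$, which is precisely the term generated by the $\kappa_1$ part of identity (\ref{eq-44}) in Step~2, so everything collapses into the stated $J_{h,\theta}$ and $S_{h,\theta}$. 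This ``difference the constraint equation in time, then test with $\xi_h$'' step is the missing idea in your outline; it mirrors the continuous energy argument, where one must differentiate (\ref{eq-16}) in time before testing.

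Two secondary inaccuracies. First, the paper tests (\ref{eq-30}) with $p_h^{nm+k}$ only after lowering the coarse index from $n$ to $n-1$, an index shift your outline omits but which is needed to align the fine and coarse levels before summing over $k$. Second, the $(1-\theta)$ residual in $S_{h,\theta}^{l}$ is not caused by an $\eta$ mismatch between Step~1 and Step~2; it comes from the Darcy term: the flux in Step~2 carries the lagged $\xi_h$ at the coarse time while $p_h^{nm+k}$ contains $\xi_h^{nm+k}$, and writing the lagged value as $\xi_h^{nm+k}-k\Delta t\,d_t^{(k)}\xi_h^{nm+k}$, as in (\ref{eq-45}), is exactly what produces $-(1-\theta)\frac{\kappa_1 k\Delta t}{\mu_f}(K d_t^{(k)}\nabla\xi_h^{nm+k},\nabla p_h^{nm+k})$. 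Your treatment of the $a$-term, the $\kappa_2\eta$-term and the source terms (polarization identity plus telescoping, with the force work absorbed into $J_{h,\theta}$) does agree with the paper's identities (\ref{eq-44})--(\ref{eq-46}) and needs no change.
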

\begin{proof}
Since the proof for the case of $\theta=1$ is simple, so we we only consider the case of $\theta=0$.
Setting $\textbf{v}_{h}=d_{t}^{(m)}\textbf{u}_h^{(n+1)m}$ in (\ref{eq-28}), $\varphi_{h}=\xi_{h}^{(n+1)m}$ in (\ref{eq-29}), and $\psi_{h}=p_{h}^{nm+k}$ in (\ref{eq-30}) after lowing the degree from $n$ to $n-1$ and then summing over $k$ from $1$ to $m$, we get
\begin{eqnarray}
&&\mu({\varepsilon(\textbf{u}_{h}^{(n+1)m})}, {\varepsilon(d_{t}^{(m)}\textbf{u}_{h}^{(n+1)m})})-({\xi_{h}^{(n+1)m}}, {\mathrm{{\rm div}}d_{t}^{(m)}\textbf{u}_{h}^{(n+1)m}})\nonumber\\ &&=({\textbf{f}}, d_{t}^{(m)}{\textbf{u}_{h}^{(n+1)m}})+\langle{\textbf{f}_{1}}, d_{t}^{(m)}{\textbf{u}_{h}^{(n+1)m}}\rangle,\label{eq-41}\\
&&\kappa_{3}({d_{t}^{(m)}\xi_{h}^{(n+1)m}},{\xi_{h}^{(n+1)m}})
 +({\mathrm{{\rm div}}d_{t}^{(m)}\textbf{u}_{h}^{(n+1)m}}, {\xi_{h}^{(n+1)m}})\nonumber\\
&&=\kappa_{1}({d_{t}^{(m)}\eta_{h}^{nm}},{\xi_{h}^{(n+1)m}}),\label{eq-42}\\
&&\sum_{k=1}^{m}[(d_{t}\eta_{h}^{(n-1)m+k},p_{h}^{nm+k})+\frac{1}{\mu_{f}}({K(\nabla(\kappa_{1}\xi_{h}^{nm}+\kappa_{2}\eta_{h}^{(n-1)m+k})-{\rho_{f}}\mathbf{g})},{\nabla p_{h}^{nm+k}})\nonumber\\
&&=\sum_{k=1}^{m} [ (\phi, {p_{h}^{nm+k}})+\langle{\phi_{1}}, {p_{h}^{nm+k}}\rangle ].\label{eq-43}
\end{eqnarray}

The first term on the left-hand side of (\ref{eq-43}) can be rewritten as
\begin{eqnarray}\label{eq-44}
&&\sum_{k=1}^{m}({d_{t}\eta_{h}^{(n-1)m+k}},p_{h}^{nm+k})
=\sum_{k=1}^{m}({d_{t}\eta_{h}^{(n-1)m+k}},{\kappa_{1}\xi_{h}^{nm+k}
+\kappa_{2}\eta_{h}^{(n-1)m+k}})\nonumber\\
&&=\sum_{k=1}^{m}(\frac{\kappa_{2}\Delta t}{2}\|d_{t}\eta_{h}^{(n-1)m+k}\|_{L^{2}(\Omega)}^{2}+\frac{\kappa_{2}}{2}d_{t}\|\eta_{h}^{(n-1)m+k}\|_{L^{2}(\Omega)}^{2})\nonumber\\
&&+\kappa_{1}m({d_{t}^{(m)}\eta_{h}^{nm}},{\xi_{h}^{(n+1)m}} ).
\end{eqnarray}

Moreover, we have
\begin{eqnarray}
&&\sum_{k=1}^{m}\frac{K}{\mu_{f}}({\nabla (k_{1}\xi_{h}^{nm}+k_{2}\eta_{h}^{(n-1)m+k})-\rho_{f} \mathbf{g}},\nabla p_{h}^{nm+k})\nonumber\\
&&=\sum_{k=1}^{m}[\frac{K}{\mu_{f}}({\nabla p_{h}^{nm+k}-\rho_{f} \mathbf{g}},\nabla p_{h}^{nm+k})-\frac{k_{1} K k\Delta t}{\mu_{f}}({d_{t}^{(k)}\nabla\xi_{h}^{nm+k}}, {\nabla p_{h}^{nm+k}})],\label{eq-45}\\
&&k_{3}({d_{t}^{(m)}\xi_{h}^{(n+1)m}},{\xi_{h}^{(n+1)m}})
=\frac{k_{3}}{2}d_{t}^{(m)}\|\xi_{h}^{(n+1)m}\|_{L^{2}(\Omega)}^{2}+\frac{k_{3} m\Delta t}{2}\|d_{t}^{(m)} \xi_{h}^{(n+1)m}\|_{L^{2}(\Omega)}^{2}.\label{eq-46}
\end{eqnarray}

Adding (\ref{eq-41})-(\ref{eq-42})(after appling the summation operator $m\Delta t\sum\limits_{n=0}^{l}$) and (\ref{eq-43})(after applying the summation operator $\Delta t\sum\limits_{n=0}^{l}$), using (\ref{eq-44})-(\ref{eq-46}), we see that (\ref{eq-40}) holds. The proof is complete.
\end{proof}

\begin{lemm}\label{lma1912-2}
Let $\{({\textbf{u}_{h}^{(n+1)m}}, {\xi_{h}^{(n+1)m}}, {\eta_{h}^{nm+k}})\}_{n\geq0, 1\leq k \leq m}$ be the solution of the multirate iterative scheme (\ref{eq-28})-(\ref{eq-31}), then there hold
\begin{eqnarray}
&&(\eta_{h}^{nm},1)=C_{\eta}(t_{nm})~~~{\rm for}~~ n=0, \,1, \,2 \cdots,\label{eq-33}\\
&&(\xi_{h}^{nm},1)=C_{\xi} (t_{(n-1+\theta)m})~~~~ {\rm for}~~n=1-\theta, \,1, \,2, \cdots,\label{eq-34}\\
&&\langle{\textbf{u}_{h}^{nm} \cdot {\textbf{n}} },1\rangle=C_{\textbf{u}}(t_{(n-1+\theta)m})~~~~ {\rm for}~~n=1-\theta, \,1, \,2, \cdots,\label{eq-35}
\end{eqnarray}
where $C_{\eta} (t)=(\eta(\cdot, t), 1)$, $C_{\xi} (t)=(\xi(\cdot, t), 1)=\frac{1}{d+\mu \kappa_{3}}(\mu k_{1}C_{\eta} (t)-(\mathbf{f}, x)-\langle\mathbf{f}_1, x\rangle)$, $C_{\textbf{u}}(t)=\langle\textbf{u}(\cdot, t)\cdot{\textbf{n}}, 1\rangle$.
\end{lemm}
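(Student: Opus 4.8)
The plan is to probe the scheme with the two global trial functions that survive integration over $\Omega$: the constant $1$ (which lies in $M_h\subset W_h$) and the identity field $x$ (which lies in ${\bf X}_h$, being affine). The algebraic facts driving everything are $\nabla 1={\bf 0}$, ${\rm div}\,x=d$, $\varepsilon(x)=I$, together with $\varepsilon({\bf r})={\bf 0}$, ${\rm div}\,{\bf r}=0$ for ${\bf r}\in{\bf RM}$ from \eqref{eq-2}. I would treat $\theta=0$; the case $\theta=1$ is analogous and, as noted, simpler. For \eqref{eq-33}, take $\psi_h=1$ in \eqref{eq-30}: the permeability term vanishes and $(d_t\eta_h^{nm+k},1)=(\phi,1)+\langle\phi_1,1\rangle$ at every fine step; summing over $k=1,\dots,m$ and then over the coarse index telescopes the left side to $(\eta_h^{nm}-\eta_h^0,1)$, while $(\phi,1)+\langle\phi_1,1\rangle=C_\eta'(t)$ follows by integrating \eqref{eq-17} over $\Omega$ and using the flux condition \eqref{eq-19}. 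Combined with the base identity $(\eta_h^0,1)=(c_0p_0+\alpha\,{\rm div}\,{\bf u}_0,1)=C_\eta(0)$ --- valid since $\mathcal{Q}_h$ is the mean-preserving $L^2$-projection onto $M_h$ and $q_0={\rm div}\,{\bf u}_0$ --- this gives \eqref{eq-33}.

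For \eqref{eq-34}, I would first take $\varphi_h=1$ in \eqref{eq-29}; by \eqref{eq-25}--\eqref{eq-26} and the divergence theorem this reads $\langle{\bf u}_h^{(n+1)m}\cdot{\bf n},1\rangle+\kappa_3(\xi_h^{(n+1)m},1)=\kappa_1(\eta_h^{nm},1)$. For the second relation, split $x={\bf v}_x+{\bf r}_x$ with ${\bf v}_x\in{\bf V}_h$, ${\bf r}_x\in{\bf RM}$ (using ${\bf X}_h={\bf V}_h\oplus{\bf RM}$) and set ${\bf v}_h={\bf v}_x$ in \eqref{eq-28}: since $\varepsilon$ and ${\rm div}$ kill ${\bf r}_x$, the bilinear forms feel only $x$, so with $\varepsilon(x)=I$, ${\rm div}\,x=d$ we get $\mu\langle{\bf u}_h^{(n+1)m}\cdot{\bf n},1\rangle-d(\xi_h^{(n+1)m},1)=({\bf f},x)+\langle{\bf f}_1,x\rangle$, the spurious term $({\bf f},{\bf r}_x)+\langle{\bf f}_1,{\bf r}_x\rangle$ being zero by the compatibility of the load data with ${\bf RM}$ that underlies the use of ${\bf H}^1_{\bot}$ and ${\bf L}^2_{\bot}$. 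Eliminating $\langle{\bf u}_h^{(n+1)m}\cdot{\bf n},1\rangle$ between the two relations yields $(\xi_h^{(n+1)m},1)=\frac{1}{d+\mu\kappa_3}\bigl(\mu\kappa_1(\eta_h^{nm},1)-({\bf f},x)-\langle{\bf f}_1,x\rangle\bigr)$, which by \eqref{eq-33} equals $C_\xi(t_{nm})$; relabelling $n+1\mapsto n$ is \eqref{eq-34}.

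For \eqref{eq-35}, substitute the last expression back into $\langle{\bf u}_h^{(n+1)m}\cdot{\bf n},1\rangle=\kappa_1(\eta_h^{nm},1)-\kappa_3(\xi_h^{(n+1)m},1)$ to get $\langle{\bf u}_h^{(n+1)m}\cdot{\bf n},1\rangle=\kappa_1C_\eta(t_{nm})-\kappa_3C_\xi(t_{nm})$; since ${\rm div}\,{\bf u}=q=\kappa_1\eta-\kappa_3\xi$ by \eqref{eq-13}, integrating over $\Omega$ shows the right side is $C_{\bf u}(t_{nm})$, and relabelling gives \eqref{eq-35}. For $\theta=1$ the initial instances ($n=0$) must be verified directly from step~(\romannumeral1): $\xi_h^0=\alpha\mathcal{Q}_hp_0-\lambda\mathcal{Q}_hq_0$ together with the stationary analogue of the computation above for $C_\xi(0)$, and $\langle(\mathcal{R}_h{\bf u}_0)\cdot{\bf n},1\rangle=\langle{\bf u}_0\cdot{\bf n},1\rangle$ from the definition \eqref{eq-47} of $\mathcal{R}_h$.

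The telescoping for \eqref{eq-33} is routine. The hard part will be the rigid-motion bookkeeping in \eqref{eq-34}: the field $x$ generally is not in ${\bf V}_h$, so one must pass to its ${\bf V}_h$-component and then prove that the discarded ${\bf RM}$-part contributes nothing to the right side --- precisely where the solvability/compatibility condition on $({\bf f},{\bf f}_1)$ is indispensable and has to be made explicit. A secondary care-point is to track the backward-Euler time levels through Steps 1--3 so that the discrete quantities line up with $C_\eta,C_\xi,C_{\bf u}$ evaluated at $t_{(n-1+\theta)m}$ rather than at a shifted time.
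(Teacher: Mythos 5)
Your proposal follows essentially the same route as the paper's own proof: test (\ref{eq-30}) with $\psi_h=1$ and telescope to get (\ref{eq-33}), test (\ref{eq-28})--(\ref{eq-29}) with $\mathbf{v}_h=x$ and $\varphi_h=1$, eliminate the $(\mathrm{div}\,\mathbf{u}_h,1)$ term to get (\ref{eq-34}), and conclude (\ref{eq-35}) via the Gauss divergence theorem and $q=\kappa_1\eta-\kappa_3\xi$. The only difference is that you are more careful than the paper, which inserts $\mathbf{v}_h=x$ into (\ref{eq-28}) directly without the ${\bf V}_h\oplus{\bf RM}$ splitting or an explicit compatibility condition on $(\mathbf{f},\mathbf{f}_1)$, and which does not separately verify the $n=0$ instance when $\theta=1$.
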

\begin{proof}
Taking $\psi_{h}=1$ in (\ref{eq-30}) and summing over $k$ from $1$ to $m$ and over $n$ from $0$ to $l$, we get
\begin{equation}\label{eq-36}
({\eta_{h}^{(l+1)m}},1)=(\eta_h^0,1)
+[(\phi,1)+\langle{\phi_1,1}\rangle]t_{(l+1)m}=C_{\eta}(t_{(l+1)m}),\ l=0, \,1, \,2, \cdots.
\end{equation}
From (\ref{eq-36}), we see that (\ref{eq-33}) holds.

Taking $\textbf{v}_{h}=x$ in (\ref{eq-28}) and $\varphi_{h}=1$ in (\ref{eq-29}), we have
\begin{eqnarray}
&&\mu({\mathrm{{\rm div}}\textbf{u}_{h}^{(n+1)m}},1)
-d(\xi_{h}^{(n+1)m},1)=(\mathbf{f},x)+\langle\mathbf{f}_{1}, x\rangle,\label{eq-37}\\
&&\kappa_{3}(\xi_{h}^{(n+1)m},1)+({\mathrm{{\rm div}}\textbf{u}_{h}^{(n+1)m}},1)
=\kappa_{1}C_{\eta}(t_{(n+\theta)m}).\label{eq-38}
\end{eqnarray}

Using (\ref{eq-37}) and (\ref{eq-38}), we obtain
\begin{equation}\label{eq-39}
(d+\mu \kappa_{3})(\xi_{h}^{(n+1)m},1)=\mu \kappa_{1}C_{\eta}(t_{(n+\theta)m})-(\mathbf{f}, x)-\langle\mathbf{f}_{1}, x\rangle.
\end{equation}
From the definition of\ $C_{\xi}(t)$ and (\ref{eq-39}), we conclude that (\ref{eq-34}) holds for all $n\geq {1-\theta}$.

Using (\ref{eq-33}), (\ref{eq-34}), (\ref{eq-38}) and the Gauss divergence theorem, we see that (\ref{eq-35}) holds. The proof is complete.
\end{proof}

Using Lemma \ref{lma1912-1} and Lemma \ref{lma1912-2}, taking the similar argument to one of \cite{X.B.Feng2014} or \cite{Evans98}, we can get the following result and omit the detail of its proof here.

\begin{thrm}\label{th1912-1}
Let ${\bf u}_0\in{\bf H}^1(\Omega), {\bf f}\in{\bf L}^2(\Omega), {\bf f}_1\in {\bf L}^2(\partial\Omega), p_0\in L^2(\Omega), \phi\in L^2(\Omega)$, and $\phi_1\in L^2(\partial\Omega)$.
Then there exists a unique numerical solution to the problem (\ref{eq-28})-(\ref{eq-31}).
\end{thrm}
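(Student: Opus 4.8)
The plan is to exploit that the multirate scheme (\ref{eq-28})--(\ref{eq-31}), read level by level in $n$, is a \emph{linear} system posed on the finite--dimensional spaces ${\bf V}_h\times M_h$ (Step~1) and $M_h$ (each substep of Step~2), and that the number of equations matches the number of unknowns. For such a square linear system, existence is equivalent to uniqueness, so it suffices to prove that the difference $(\delta{\bf u}_h,\delta\xi_h,\delta\eta_h)$ of two solutions sharing the same data $({\bf u}_0,p_0,{\bf f},{\bf f}_1,\phi,\phi_1)$ must vanish; note that $(\delta{\bf u}_h,\delta\xi_h,\delta\eta_h)$ solves (\ref{eq-28})--(\ref{eq-31}) with all right--hand side data (and the fixed gravity terms) replaced by zero, and with $\delta{\bf u}_h^0=\delta p_h^0=\delta q_h^0=\delta\eta_h^0=\delta\xi_h^0=0$ by step (\romannumeral1). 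I would then argue by induction on $n$, treating $\theta=0$ and $\theta=1$ slightly differently because for $\theta=0$ the iteration decouples (Step~1 uses the already-known $\eta_h^{nm}$), whereas for $\theta=1$ Step~1 and Step~2 are coupled through $\eta_h^{(n+1)m}$.

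For $\theta=0$, assume inductively $\delta\eta_h^{nm}=0$ (true for $n=0$). In Step~1, take ${\bf v}_h=\delta{\bf u}_h^{(n+1)m}$ in (\ref{eq-28}) and $\varphi_h=\delta\xi_h^{(n+1)m}$ in (\ref{eq-29}) and add; the $b(\cdot,\cdot)$ terms cancel, the right--hand sides vanish, and one is left with
\[
\mu\|\varepsilon(\delta{\bf u}_h^{(n+1)m})\|_{L^2(\Omega)}^2+\kappa_3\|\delta\xi_h^{(n+1)m}\|_{L^2(\Omega)}^2=0 .
\]
Hence $\varepsilon(\delta{\bf u}_h^{(n+1)m})=0$, so $\delta{\bf u}_h^{(n+1)m}\in{\bf RM}$ by the characterization recalled after (\ref{eq-2}); since $\delta{\bf u}_h^{(n+1)m}\in{\bf V}_h$ is orthogonal to ${\bf RM}$, testing the orthogonality against itself gives $\delta{\bf u}_h^{(n+1)m}=0$. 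If $\kappa_3>0$ the identity already gives $\delta\xi_h^{(n+1)m}=0$; if $\kappa_3=0$ I would feed $\delta{\bf u}_h^{(n+1)m}=0$ back into (\ref{eq-28}) to get $({\rm div}\,{\bf v}_h,\delta\xi_h^{(n+1)m})=0$ for all ${\bf v}_h\in{\bf V}_h$, observe that (\ref{eq-34}) with vanishing data puts $\delta\xi_h^{(n+1)m}\in M_{0h}$, and invoke the inf--sup condition (\ref{eq-23}) to conclude $\delta\xi_h^{(n+1)m}=0$. With $\delta\xi_h^{(n+1)m}$ fixed to zero, Step~2 reads, for $k=1,\dots,m$, $\tfrac1{\Delta t}(\delta\eta_h^{nm+k},\psi_h)+\tfrac{k_2}{\mu_f}(K\nabla\delta\eta_h^{nm+k},\nabla\psi_h)=\tfrac1{\Delta t}(\delta\eta_h^{nm+k-1},\psi_h)$; the left--hand bilinear form is coercive on $M_h\subset H^1(\Omega)$ by the uniform positive--definiteness of $K$, so the Lax--Milgram lemma together with $\delta\eta_h^{nm+k-1}=0$ (inner induction on $k$, base $\delta\eta_h^{nm}=0$) forces $\delta\eta_h^{nm+k}=0$. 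Step~3 then yields $\delta p_h^{(n+1)m}=\delta q_h^{(n+1)m}=0$, closing the induction and proving uniqueness, hence existence, for $\theta=0$.

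For $\theta=1$ the coupling blocks the step-by-step argument, so I would instead use the energy identity of Lemma~\ref{lma1912-1}: with vanishing data $J_{h,1}^0=0$, so $J_{h,1}^{l+1}+S_{h,1}^{l}=0$ for all $l\ge0$. Crucially, the choice $\theta=1$ removes the indefinite cross term in $S_{h,\theta}^{l}$ (the factor $1-\theta$) and the $\phi,\phi_1$ contributions disappear, so every remaining summand of $S_{h,1}^{l}$ is nonnegative (again using uniform positive--definiteness of $K$), while $J_{h,1}^{l+1}\ge0$; therefore $J_{h,1}^{l+1}=0$ and $S_{h,1}^{l}=0$. From $J_{h,1}^{l+1}=0$ one reads off $\varepsilon(\delta{\bf u}_h^{(l+1)m})=0$, $\delta\eta_h^{(l+1)m}=0$ (since $\kappa_2>0$) and, when $\kappa_3>0$, $\delta\xi_h^{(l+1)m}=0$; as before $\varepsilon(\delta{\bf u}_h^{(l+1)m})=0$ together with ${\bf V}_h\perp{\bf RM}$ gives $\delta{\bf u}_h^{(l+1)m}=0$, and when $\kappa_3=0$ the pair ((\ref{eq-28}), (\ref{eq-34}), (\ref{eq-23})) again yields $\delta\xi_h^{(l+1)m}=0$. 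Finally $S_{h,1}^{l}=0$ forces the discrete time differences $d_t\delta\eta_h^{\,\cdot}$ to vanish, so $\delta\eta_h$ is constant on each coarse interval and equals the vanishing coarse--node value, giving $\delta\eta_h^{nm+k}=0$ for all intermediate $k$; Step~3 again gives $\delta p_h=\delta q_h=0$.

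The main obstacle is the degenerate case $c_0=0$, i.e. $\kappa_3=0$: there $c(\cdot,\cdot)\equiv0$, Step~1 loses its coercivity in $\xi_h$, and neither the testing identity nor $J_{h,\theta}^{l+1}=0$ controls $\xi_h$ on its own, because the inf--sup condition (\ref{eq-23}) only controls the mean--zero component of an element of $M_h$. Resolving this is precisely the role of Lemma~\ref{lma1912-2}: equation (\ref{eq-34}) pins the mean value $(\xi_h^{nm},1)$ to a data--determined constant, which is $0$ for homogeneous data, so that $\xi_h^{nm}\in M_{0h}$ and (\ref{eq-23}) applies. A secondary point, already visible above, is that the $\theta=1$ argument genuinely needs the global identity of Lemma~\ref{lma1912-1} rather than a sequential solvability argument, and one must verify that, once the source terms are stripped off, $S_{h,1}^{l}$ is a bona fide sum of nonnegative quantities; this is straightforward since for $\theta=1$ the only potentially negative term, the one carrying the factor $1-\theta$, is absent.
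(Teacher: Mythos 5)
Your proposal is correct and takes essentially the approach the paper itself intends: Theorem \ref{th1912-1} is stated without a written proof, the authors only pointing to Lemma \ref{lma1912-1}, Lemma \ref{lma1912-2} and the argument of Feng et al., and your reduction to a square finite-dimensional linear system combined with the energy identity, the conservation laws (which fix the mean value of $\xi_h$ and so handle the degenerate case $c_0=0$, i.e. $\kappa_3=0$, via the inf-sup condition (\ref{eq-23}) and the ${\bf RM}$-orthogonality of ${\bf V}_h$) is exactly the fleshed-out version of that sketch. I see no gap; your case split between $\theta=0$ (sequential induction over time levels) and $\theta=1$ (global use of the energy identity for the coupled system) is the right way to organize the omitted details.
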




\subsection{Error estimates}

Next, we give the error estimate of the multirate iterative scheme. To do that, we firstly introduce some projection operators. Firstly, for any $\mathbf{v}\in {\bf H}^{1}_\bot(\Omega)$, we define its elliptic projection $\mathcal{R}_{h}: {\bf H}^{1}_\bot(\Omega)\rightarrow {\bf V}_{h}$ by
\begin{equation}\label{eq-47}
(\varepsilon(\mathcal{R}_{h} \mathbf{v}-\mathbf{v}),\varepsilon(\mathbf{w}_{h}))=0\quad \forall\mathbf{w}_{h}\in V_{h}.
\end{equation}
Secondly, for any $\varphi \in H^{1}(\Omega)$ we define the projection operator $\mathcal{S}_{h}: H^{1}(\Omega)\rightarrow M_{h}$ by
\begin{eqnarray}
(\nabla \mathcal{S}_{h} \varphi, \nabla \varphi_{h})=(\nabla \varphi,\nabla \varphi_{h})\quad\forall\mathbf{\psi}_{h}\in M_{h},\label{eq-47-11}\\
(\mathcal{S}_{h} \varphi, 1)=(\varphi,1).\nonumber
\end{eqnarray}
Finally, for any\ $\varphi\in L^{2}(\Omega)$, we define the $L^{2} $-projection $\mathcal{Q}_{h}: L^{2}(\Omega)\rightarrow M_{h}$ as
\begin{equation}\label{eq-50}
(\mathcal{Q}_{h} \varphi-\varphi, \psi_{h})_{K}=0, \forall\psi_{h}\in P_{r_{2}}(K).
\end{equation}

From \cite{S.C.Brenner2008}, we know that the following estimates hold:
\begin{lemm}
 The projection operators of $\mathcal{R}_{h}, \mathcal{S}_{h}, \mathcal{Q}_{h}$ satisfy
\begin{eqnarray}
\|\mathcal{R}_{h}\mathbf{v}-\mathbf{v}\|_{L^{2}(\Omega)}+h\|\nabla(\mathcal{R}_{h}\mathbf{v}-\mathbf{v})\|_{L^{2}(\Omega)}\leq Ch^{s+1}\|\mathbf{v}\|_{H^{s+1}(\Omega)}, 0\leq s\leq k;\label{eq-48}\\
\|\mathcal{S}_{h}\varphi-\varphi\|_{L^{2}(\Omega)}+h\|\nabla(\mathcal{S}_{h}\varphi-\varphi)\|_{L^{2}(\Omega)}\leq Ch^{s+1}\|\varphi\|_{H^{s+1}(\Omega)}, 0\leq s\leq k；\label{eq-49}\\
\|\mathcal{Q}_{h}\varphi -\varphi\|_{L^{2}(\Omega)}+h\|\nabla(\mathcal{Q}_{h}\varphi -\varphi)\|_{L^{2}(\Omega)}\leq Ch^{s+1} \|\varphi\|_{H^{s+1}(\Omega)}, 0\leq s\leq k,\label{eq-51}
\end{eqnarray}
where $k$ is the degree of piecewise polynomial of finite element space.
\end{lemm}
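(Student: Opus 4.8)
The plan is to treat the three operators one at a time, in each case first establishing a best-approximation (quasi-optimality) bound in the energy norm naturally associated with the projection, then inserting the standard polynomial approximation estimate for a finite element quasi-interpolant, and finally upgrading from the energy norm to the $L^2$ norm by an Aubin--Nitsche duality argument. Throughout I would fix a Scott--Zhang (or Cl\'ement) quasi-interpolant $I_h$ onto $\mathbf{X}_h$, respectively $M_h$, which satisfies $\|\varphi-I_h\varphi\|_{L^2(\Omega)}+h\|\nabla(\varphi-I_h\varphi)\|_{L^2(\Omega)}\le Ch^{s+1}\|\varphi\|_{H^{s+1}(\Omega)}$ for $0\le s\le k$ (this is the only place the low-regularity case $s=0$ needs care, since the nodal Lagrange interpolant is not defined on $H^1$ in dimension $d\ge 2$); the bound follows from the Bramble--Hilbert lemma on the reference element together with the quasi-uniformity of $\mathcal{T}_h$.

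For $\mathcal{R}_h$: Korn's inequality makes $(\varepsilon(\cdot),\varepsilon(\cdot))$ coercive on $\mathbf{H}^1_\bot(\Omega)$ and on $\mathbf{V}_h$, so by Lax--Milgram (\ref{eq-47}) defines $\mathcal{R}_h\mathbf{v}\in\mathbf{V}_h$ uniquely; the defining identity is in fact a Galerkin orthogonality against all of $\mathbf{X}_h$ because both sides vanish on $\mathbf{RM}$. Hence $\|\varepsilon(\mathbf{v}-\mathcal{R}_h\mathbf{v})\|_{L^2(\Omega)}\le\inf_{\mathbf{w}_h\in\mathbf{X}_h}\|\varepsilon(\mathbf{v}-\mathbf{w}_h)\|_{L^2(\Omega)}\le\|\varepsilon(\mathbf{v}-I_h\mathbf{v})\|_{L^2(\Omega)}\le Ch^{s}\|\mathbf{v}\|_{H^{s+1}(\Omega)}$, which with Korn gives the $H^1$ part of (\ref{eq-48}). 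For the $L^2$ part, put $\mathbf{e}=\mathbf{v}-\mathcal{R}_h\mathbf{v}\in\mathbf{H}^1_\bot(\Omega)$, let $\mathbf{z}\in\mathbf{H}^1_\bot(\Omega)$ solve the pure-traction problem $(\varepsilon(\mathbf{z}),\varepsilon(\mathbf{w}))=(\mathbf{e},\mathbf{w})$ for all $\mathbf{w}\in\mathbf{H}^1_\bot(\Omega)$ (the data $\mathbf{e}\perp\mathbf{RM}$ gives compatibility), assume $\|\mathbf{z}\|_{H^2(\Omega)}\le C\|\mathbf{e}\|_{L^2(\Omega)}$, and compute $\|\mathbf{e}\|_{L^2(\Omega)}^2=(\varepsilon(\mathbf{z}),\varepsilon(\mathbf{e}))=(\varepsilon(\mathbf{z}-I_h\mathbf{z}),\varepsilon(\mathbf{e}))\le Ch\|\mathbf{z}\|_{H^2(\Omega)}\|\varepsilon(\mathbf{e})\|_{L^2(\Omega)}$, so that $\|\mathbf{e}\|_{L^2(\Omega)}\le Ch\|\varepsilon(\mathbf{e})\|_{L^2(\Omega)}\le Ch^{s+1}\|\mathbf{v}\|_{H^{s+1}(\Omega)}$.

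For $\mathcal{S}_h$ and $\mathcal{Q}_h$: since $\|\nabla\cdot\|_{L^2(\Omega)}$ is a norm on $H^1(\Omega)/\mathbb{R}$, the first line of (\ref{eq-47-11}) determines $\mathcal{S}_h\varphi$ up to a constant and the mean constraint fixes that constant, so $\mathcal{S}_h$ is well defined; the same best-approximation argument as above (the mean constraint being invisible to $\nabla$) gives $\|\nabla(\varphi-\mathcal{S}_h\varphi)\|_{L^2(\Omega)}\le Ch^{s}\|\varphi\|_{H^{s+1}(\Omega)}$, and Aubin--Nitsche with the dual Neumann problem $-\Delta w=e$, $\nabla w\cdot\mathbf{n}=0$, $(w,1)=0$ (compatible because $(e,1)=0$ by the mean constraint) and its $H^2$ regularity yields the $L^2$ estimate (\ref{eq-49}). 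For $\mathcal{Q}_h$ the $L^2$ bound is immediate from $L^2$-orthogonality, $\|\varphi-\mathcal{Q}_h\varphi\|_{L^2(\Omega)}\le\|\varphi-I_h\varphi\|_{L^2(\Omega)}\le Ch^{s+1}\|\varphi\|_{H^{s+1}(\Omega)}$; for the gradient I would write $\nabla(\varphi-\mathcal{Q}_h\varphi)=\nabla(\varphi-I_h\varphi)+\nabla(I_h\varphi-\mathcal{Q}_h\varphi)$, bound the first term by $Ch^{s}\|\varphi\|_{H^{s+1}(\Omega)}$, and for the second use the inverse inequality on the quasi-uniform mesh together with the $L^2$ bounds just proved, $\|\nabla(I_h\varphi-\mathcal{Q}_h\varphi)\|_{L^2(\Omega)}\le Ch^{-1}\|I_h\varphi-\mathcal{Q}_h\varphi\|_{L^2(\Omega)}\le Ch^{-1}\bigl(\|\varphi-I_h\varphi\|_{L^2(\Omega)}+\|\varphi-\mathcal{Q}_h\varphi\|_{L^2(\Omega)}\bigr)\le Ch^{s}\|\varphi\|_{H^{s+1}(\Omega)}$; multiplying through by $h$ gives (\ref{eq-51}).

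The one genuinely delicate ingredient is the $H^2$ elliptic regularity of the auxiliary pure-traction elasticity problem and of the dual Neumann problem invoked in the duality steps: on a general polygonal $\Omega$ this requires either convexity or an explicit accounting of the corner singularities, and one must be careful to pose the dual problems in the orthogonal-complement spaces $\mathbf{H}^1_\bot(\Omega)$ and $H^1(\Omega)/\mathbb{R}$ so that the compatibility conditions hold. Everything else is routine scaling plus the inverse estimate; indeed, as the statement indicates, these are exactly the classical finite element projection bounds and may simply be quoted from \cite{S.C.Brenner2008}.
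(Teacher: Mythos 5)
Your proposal is correct, but it does substantially more than the paper does: the paper offers no proof of this lemma at all, simply quoting the three estimates as standard facts from the finite element literature (the Brenner--Scott reference), whereas you reconstruct the underlying arguments — C\'ea-type quasi-optimality in the energy (semi)norm using a Scott--Zhang interpolant, Aubin--Nitsche duality for the $L^2$ bounds on $\mathcal{R}_h$ and $\mathcal{S}_h$, and an inverse estimate to recover the gradient bound (\ref{eq-51}) for the $L^2$-projection. Your treatment of the pure-traction elasticity projection is the right one for this setting: exploiting that the orthogonality in (\ref{eq-47}) extends from ${\bf V}_h$ to all of ${\bf X}_h$ because $\varepsilon$ annihilates ${\bf RM}$, and posing the dual problem in ${\bf H}^1_\bot(\Omega)$ so that the compatibility condition is automatic. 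Two remarks on where your version is more honest than the paper's citation: first, the $H^2$ elliptic regularity you flag (for the pure-traction problem and the Neumann dual problem on a polygonal domain) is a genuine hypothesis that the paper leaves implicit; second, the paper's definition (\ref{eq-50}) of $\mathcal{Q}_h$ is written elementwise against $P_{r_2}(K)$, which as stated would produce a discontinuous projection not lying in $M_h$ — you silently read it as the global $L^2(\Omega)$-projection onto $M_h$, which is the only reading under which the $H^1$ estimate in (\ref{eq-51}) (via the inverse inequality on the quasi-uniform mesh) makes sense, and is surely what the authors intend. With those caveats noted, your argument is complete and consistent with the result being quoted.
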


To derive the error estimates for the numerical solution, we split the errors into two parts by the following forms:
\begin{eqnarray}
&&E_{\mathbf{u}}^{n}:~=\mathbf{u}(t_{n})-\mathbf{u}_{h}^{n}=\mathbf{u}(t_{n})-\mathcal{R}_{h}(\mathbf{u}(t_{n}))+\mathcal{R}_{h}(\mathbf{u}(t_{n}))-\mathbf{u}_{h}^{n}:~=\Lambda_{u}^{n}+\Theta_{u}^{n},\nonumber\\
&&E_{\xi}^{n}:~=\xi(t_{n})-\xi_{h}^{n}=\xi(t_{n})-\mathcal{Q}_{h}(\xi(t_{n}))+\mathcal{Q}_{h}(\xi(t_{n}))-\xi_{h}^{n}:~=\Lambda_{\xi}^{n}+\Theta_{\xi}^{n},\nonumber\\
&&E_{\eta}^{n}:~=\eta(t_{n})-\eta_{h}^{n}=\eta(t_{n})-\mathcal{Q}_{h}(\eta(t_{n}))+\mathcal{Q}_{h}(\eta(t_{n}))-\eta_{h}^{n}:~=\Lambda_{\eta}^{n}+\Theta_{\eta}^{n},\nonumber\\
&&E_{p}^{n}:~=p(t_{n})-p_{h}^{n}=p(t_{n})-\mathcal{Q}_h(p(t_{n}))+\mathcal{Q}_h(p(t_{n}))-p_{h}^{n}:~=\Lambda_{p}^{n}+\Theta_{p}^{n}.\nonumber\\
&&E_{p}^{n}:~=p(t_{n})-p_{h}^{n}=p(t_{n})-\mathcal{S}_h(p(t_{n}))+\mathcal{S}_h(p(t_{n}))-p_{h}^{n}:~=\Psi_{p}^{n}+\Phi_{p}^{n}.\nonumber
\end{eqnarray}
Trivially, we have $ \Phi_{p}^{n}=\Lambda_{p}^{n}-\Psi_{p}^{n}+\Theta_{p}^{n} $.

To convenience, we introduce the notations as follows:
\begin{eqnarray}
&&\zeta_{h}^{l+1}=\frac{1}{2}[\mu\|\varepsilon(\Theta_{u}^{(l+1)m})\|_{L^{2}(\Omega)}^{2} + k_{2}\|\Theta_{\eta}^{(l+\theta)m}\|_{L^{2}(\Omega)}^{2} +k_{3}\|\Theta_{\xi}^{(l+1)m}\|_{L^{2}(\Omega)}^{2}],\label{eq200121-30}\\
&&R_{h}^{(n+1)m}=-\frac{1}{m\Delta t} \int_{t_{nm}}^{t_{(n+1)m}}(s-t_{nm})\eta_{tt}(s)ds.\label{eq200121-31}
\end{eqnarray}

\begin{lemm}\label{lem200121-1}
Let 
$\{({\textbf{u}_{h}^{(n+1)m}}, {\xi_{h}^{(n+1)m}}, {\eta_{h}^{(n+\theta)m}})\}$ be the solution of the multirate iterative scheme (\ref{eq-28})-(\ref{eq-31}), 
then we have
\begin{eqnarray}
&&\zeta_{h}^{l+1} +m\Delta t\sum\limits_{n=0}^{l}[\frac{K}{\mu_{f}}(\nabla \Phi_{p}^{(n+1)m}-K\rho_{f}\mathbf{g}, \nabla \Phi_{p}^{(n+1)m})
+\frac{\kappa_{2}\Delta t}{2}\|d_{t}^{(m)} \Theta_{\eta}^{(n+\theta)m}\|_{L^{2}(\Omega)}^{2}]\nonumber\\
&&+m\Delta t\sum_{n=0}^{l}\frac{m\Delta t}{2}(\mu\|d_{t}^{(m)} \varepsilon(\Theta_{u}^{(n+1)m})\|_{L^{2}(\Omega)}^{2}+\kappa_{3} \|d_{t}^{(m)} \Theta_{\xi}^{(n+1)m}\|_{L^{2}(\Omega)}^{2} )   \nonumber\\
&&=\zeta_{h}^{0}+E_{1}+E_{2}+E_{3}+E_{4}+E_{5}+E_{6}+E_{7},\label{eq-52}
\end{eqnarray}
where
\begin{eqnarray}
&&E_{1}=m\Delta t\sum\limits_{n=0}^{l}[(\Lambda_{\xi}^{(n+1)m},\mathrm{{\rm div}}(d_{t}^{(m)} \Theta_{u}^{(n+1)m}))-(\mathrm{{\rm div}}(d_{t}^{(m)} \Lambda_{u}^{(n+1)m}),\Theta_{\xi}^{(n+1)m})],\nonumber\\
&&E_{2}=-k_{3}m\Delta t\sum\limits_{n=0}^{l}(d_{t}^{(m)}\Lambda_{\xi}^{(n+1)m}, \Theta_{\xi}^{(n+1)m}),\nonumber\\
&&E_{3}=m\Delta t\sum\limits_{n=0}^{l}(R_{h}^{(n+\theta)m}, \Phi_{p}^{(n+1)m}),\nonumber\\
&&E_{4}=(1-\theta)k_{1} (m\Delta t)^{2}\sum\limits_{n=0}^{l}(d_{t}^{2(m)} \eta(t_{(n+1)m}),\Theta_{\xi}^{(n+1)m}),\nonumber\\
&&E_{5}=m\Delta t\sum\limits_{n=0}^{l}(d_{t}^{(m)}\Theta_{\eta}^{(n+\theta)m} ,\Psi_{p}^{(n+1)m}-\Lambda_{p}^{(n+1)m}) . \nonumber\\
&&E_{6}=(1-\theta)(m\Delta t)^{2}\sum\limits_{n=0}^{l}\frac{K k_{1}}{\mu_{f}}(d_{t}^{(m)} \nabla \Lambda_{\xi}^{(n+1)m}, \nabla \Phi_{p}^{(n+1)m}).\nonumber\\
&&E_{7}=(1-\theta)(m\Delta t)^{2}\sum\limits_{n=0}^{l}\frac{K k_{1}}{\mu_{f}}(d_{t}^{(m)} \nabla \Theta_{\xi}^{(n+1)m}, \nabla \Phi_{p}^{(n+1)m}).\nonumber
\end{eqnarray}
\end{lemm}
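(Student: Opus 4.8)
The plan is to replay the energy identity of Lemma \ref{lma1912-1}, but on the error equations rather than on the scheme. First I would write the weak form of the exact system (\ref{eq-15})--(\ref{eq-20}): (\ref{eq-15}) tested against $\mathbf{v}_h\in{\bf V}_h$ and (\ref{eq-16}) against $\varphi_h\in M_h$ at the coarse level $t_{(n+1)m}$, and the Darcy--diffusion equation (\ref{eq-17}) tested against $\psi_h\in M_h$ at the fine levels $t_{nm+k}$, $k=1,\dots,m$; subtracting (\ref{eq-28}), (\ref{eq-29}), (\ref{eq-30}) yields three error identities. Inserting the splittings $E_{\mathbf u}=\Lambda_u+\Theta_u$, $E_\xi=\Lambda_\xi+\Theta_\xi$, $E_\eta=\Lambda_\eta+\Theta_\eta$, $E_p=\Lambda_p+\Theta_p=\Psi_p+\Phi_p$ and invoking the defining relations of the projections removes the Galerkin-orthogonal parts: (\ref{eq-47}) annihilates $(\varepsilon(\Lambda_u),\varepsilon(\mathbf{v}_h))$ in the momentum identity, so $\Lambda_u$ survives only through the $b(\cdot,\cdot)$ coupling, and (\ref{eq-47-11}) annihilates $(\nabla\Psi_p,\nabla\psi_h)$ in the diffusion identity. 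What is left over are the $L^2$-type projection remainders which will become $E_1$ (the $b$-type couplings of $\Lambda_\xi$ and $\Lambda_u$ against the $\Theta$'s), $E_2$ (the $c(\cdot,\cdot)$ term with $d_t^{(m)}\Lambda_\xi$) and $E_5$ (the discrepancy $\Psi_p-\Lambda_p$ between the two pressure projections). The original pressure error is routed through the trivial identity $\Phi_p^n=\Lambda_p^n-\Psi_p^n+\Theta_p^n$ recorded just before the lemma, so that the Darcy term appears against $\nabla\Phi_p^{(n+1)m}$ as in (\ref{eq-52}).

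The second step is the consistency bookkeeping. On the right of (\ref{eq-17}) the time derivative $\eta_t$ must be traded for the discrete quotient $d_t\eta_h$; since the fine solves (\ref{eq-30}) carry the frozen coarse pseudo-pressure $\xi_h^{(n+1)m}$, the sum $\sum_{k=1}^m$ telescopes the fine $\eta$-differences into a coarse difference $d_t^{(m)}$ and the matching consistency error telescopes into the coarse-scale Taylor remainder $R_h^{(n+\theta)m}$ of (\ref{eq200121-31}), producing $E_3$. For $\theta=0$ there are two genuinely new mismatches: (\ref{eq-29}) uses $\eta_h^{nm}$ while the exact relation (\ref{eq-16}) at $t_{(n+1)m}$ carries $\eta(t_{(n+1)m})$, and the fine solves use $\xi_h^{(n+1)m}$ while the exact (\ref{eq-17}) at $t_{nm+k}$ carries $\xi(t_{nm+k})$; expanding $\eta(t_{(n+1)m})-\eta(t_{nm})$ and $\xi(t_{nm+k})-\xi(t_{(n+1)m})$ in $m\Delta t$ (resp. $\Delta t$), splitting into projection and discrete parts, and reorganizing the coarse differences as in (\ref{eq-44})--(\ref{eq-46}) produces the second-order-in-time terms $E_4$, $E_6$, $E_7$ and sends the remaining projection pieces into $E_1$, $E_2$, $E_5$. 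All of these carry the factor $(1-\theta)$ and vanish for $\theta=1$, which is why only the case $\theta=0$ needs to be handled in detail.

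Finally I would choose the test functions exactly as in Lemma \ref{lma1912-1}: $\mathbf{v}_h=d_t^{(m)}\Theta_u^{(n+1)m}$ in the momentum error identity, $\varphi_h=\Theta_\xi^{(n+1)m}$ in the constitutive error identity, and $\psi_h=\Phi_p^{nm+k}$ in the fine diffusion error identity (after lowering $n\to n-1$ and summing over $k=1,\dots,m$), apply $m\Delta t\sum_{n=0}^l$ to the first two and $\Delta t\sum_{n=0}^l$ to the third, and add. The algebraic identities (\ref{eq-44})--(\ref{eq-46}), i.e. $2(a-b,a)=|a|^2-|b|^2+|a-b|^2$ applied at both the fine scale $d_t$ and the coarse scale $d_t^{(m)}$, telescope the time-difference pairings into $\zeta_h^{l+1}-\zeta_h^0$ plus the non-negative dissipation quantities $\|d_t^{(m)}\varepsilon(\Theta_u^{(n+1)m})\|_{L^2(\Omega)}^2$, $\|d_t^{(m)}\Theta_\xi^{(n+1)m}\|_{L^2(\Omega)}^2$, $\|d_t^{(m)}\Theta_\eta^{(n+\theta)m}\|_{L^2(\Omega)}^2$ that sit on the left of (\ref{eq-52}), while the Darcy contributions reassemble into $\frac{K}{\mu_f}(\nabla\Phi_p^{(n+1)m}-K\rho_f\mathbf{g},\nabla\Phi_p^{(n+1)m})$. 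As in the stability proof, the cross terms $({\rm div}\,d_t^{(m)}\Theta_u^{(n+1)m},\Theta_\xi^{(n+1)m})$ and $\kappa_1 m(d_t^{(m)}\Theta_\eta^{(n+\theta)m},\Theta_\xi^{(n+1)m})$ must cancel between the Stokes block and the diffusion block; I expect the main obstacle to be precisely this cancellation together with the index bookkeeping that sends every remaining term into exactly one of $E_1,\dots,E_7$ — delicate but routine accounting that requires no new estimate beyond Taylor's theorem and the projection identities, so I would present the three tested error identities, group the terms, and refer the telescoping verification back to (\ref{eq-44})--(\ref{eq-46}).
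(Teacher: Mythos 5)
Your proposal follows essentially the same route as the paper: derive discrete error equations from (\ref{eq-28})--(\ref{eq-30}) and the projection identities, pick the same test functions as in Lemma \ref{lma1912-1} ($\mathbf{v}_h=d_t^{(m)}\Theta_u^{(n+1)m}$, $\varphi_h=\Theta_\xi^{(n+1)m}$ after differencing the constitutive relation), apply the summation operators, and telescope with the Taylor remainder $R_h^{(n+\theta)m}$ supplying $E_3$ and the $(1-\theta)$ mismatches supplying $E_4$, $E_6$, $E_7$. The one place you deviate is the diffusion block: you propose testing the fine-level error identity with $\psi_h=\Phi_p^{nm+k}$ and summing $\Delta t\sum_n\sum_k$, which would leave fine-level Darcy and dissipation terms, whereas (\ref{eq-52}) contains only coarse-level quantities; to land on the stated identity you must (as the paper does in (\ref{eq-55})) first collapse the $m$ fine solves into a single coarse-level error equation and test it with $\psi_h=\Phi_p^{(n+1)m}=\Lambda_p^{(n+1)m}-\Psi_p^{(n+1)m}+\kappa_1\Theta_\xi^{(n+1)m}+\kappa_2\Theta_\eta^{(n+\theta)m}$, summed with $m\Delta t\sum_{n=0}^{l}$. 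With that adjustment the assembly is exactly the paper's argument.
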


\begin{proof}
Using (\ref{eq-28})-(\ref{eq-30}) and  the definition of $\mathcal{Q}_{h}, ~\mathcal{R}_{h}, ~\mathcal{S}_{h}$, we get
\begin{eqnarray}
&&\mu({\varepsilon(\Theta_{u}^{(n+1)m})},{\varepsilon(\textbf{v}_{h})}) -({\Lambda_{\xi}^{(n+1)m}+\Theta_{\xi}^{(n+1)m}}, {\mathrm{{\rm div}}\textbf{v}_{h}})= 0,\label{eq-53}\\
&&k_{3}(\Lambda_{\xi}^{(n+1)m}+\Theta_{\xi}^{(n+1)m}, \varphi_{h})+ (\mathrm{{\rm div}}\Lambda_{u}^{(n+1)m}+\mathrm{{\rm div}}\Theta_{u}^{(n+1)m}, \varphi_{h}) \nonumber\\ &&=k_{1}({\Lambda_{\eta}^{(n+\theta)m}+\Theta_{\eta}^{(n+\theta)m}}, {\varphi_{h}})+(1-\theta)k_{1}m\Delta t(d_{t}^{(m)} \eta(t_{(n+1)m}),\varphi_{h}), \forall \varphi_{h}\in M_{h}, \label{eq-54}\\
&&(d_{t} \Lambda_{\eta}^{(n+\theta)m}+d_{t}\Theta_{\eta}^{(n+\theta)m},\psi_{h})+\frac{K}{\mu_{f}}(\nabla \Phi_{p}^{(n+1)m}-\rho_{f}\mathbf{g},\nabla \psi_{h})\nonumber\\
&&~~-(1-\theta)m\Delta t\frac{K k_{1}}{\mu_{f}}(d_{t}^{(m)} \nabla E_{\xi}^{(n+1)m}, \nabla \psi_{h})=(R_{h}^{(n+\theta)m},\psi_{h}), ~~~~\forall \psi_{h}\in M_{h}.\label{eq-55}
\end{eqnarray}

Setting $\mathbf{v}_{h}=d_{t}^{(m)} \Theta_{u}^{(n+1)m}$ in (\ref{eq-53}), $\varphi_{h}=\Theta_{\xi}^{(n+1)m}$ after applying the difference operator $d_{t}^{(m)}$ to the equation in (\ref{eq-54}), applying the summation operator $m\Delta t\sum\limits_{n=0}^{l}$ to both sides, and taking $\psi_{h}=\Phi_{p}^{(n+1)m}
=\Lambda_{p}^{(n+1)m}-\Psi_{p}^{(n+1)m}+\kappa_1\Theta_{\xi}^{(n+1)m}+\kappa_2\Theta_{\eta}^{(n+\theta)m}$ in (\ref{eq-55}), after applying the summation operator $m\Delta t\sum\limits_{n=0}^{l}$ to both sides and adding the resulting equations, we see
that (\ref{eq-52}) holds. The proof is complete.
\end{proof}

\begin{thrm}\label{th1912-10}
Suppose that  $\mathbf{u}\in L^{\infty}(0,T;{\bf H}^{1}_{\bot}(\Omega))$, $\xi\in L^{\infty}(0,T; L^{2}(\Omega))$, $\eta \in L^{\infty}(0,T; L^{2}(\Omega))\cap H^{1}(0,T; H^{1}(\Omega)')$, $p\in L^{\infty}(0,T; L^{2}(\Omega))\cap L^{2}(0,T;H^{1}(\Omega))$ are the solution of the problem (\ref{eq-15})-(\ref{eq-20}), and let $\{({\textbf{u}_{h}^{(n+1)m}}, {\xi_{h}^{(n+1)m}}, {\eta_{h}^{(n+\theta)m}})\}_{n\geq0, \theta=0,1}$ be the solution of the multirate iterative scheme (\ref{eq-28})-(\ref{eq-31}),  then we have
\begin{eqnarray}\label{eq-56}
&&\max\limits_{0\leq n\leq l}[\sqrt{\mu}\|\varepsilon(\Theta_{u}^{(n+1)m})\|_{L^{2}(\Omega)} + \sqrt{k_{2}}\|\Theta_{\eta}^{(n+\theta)m}\|_{L^{2}(\Omega)} + \sqrt{k_{3}}\|\Theta_{\xi}^{n+1}\|_{L^{2}(\Omega)}]\nonumber\\
&&+[m\Delta t\sum\limits_{n=0}^{l} \frac{K}{\mu_f}\|\nabla\Phi_{p}^{(n+1)m}\|_{L^{2}(\Omega)}^{2}]^{1/2} \leq C_{1}(T)m\Delta t+C_{2}(T)m h^{2}
\end{eqnarray}
provided that $\Delta t=O(h^{2})$ when $\theta=0$ and $\Delta t>0$ when $\theta=1$, where\ $C_{1}(T)=C\|\eta_{t}\|_{L^{2}((0,T); L^{2}(\Omega))}+C\|\eta_{tt}\|_{L^{2}((0,T);H^{-1}(\Omega))}$~and
$C_{2}(T)=C\|\xi_{t}\|_{L^2((0,T);H^{2}(\Omega))} +C\|\xi\|_{L^{\infty}((0,T);H^{2}(\Omega))}+C\|\mathrm{{\rm div}}\mathbf{u}_{t}\|_{L^{2}((0,T);{\bf H}^{2}(\Omega))}.$
\end{thrm}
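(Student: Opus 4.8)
The plan is to derive the error estimate from the energy identity of Lemma \ref{lem200121-1} by bounding the seven right-hand side terms $E_1,\dots,E_7$ and then applying a discrete Gronwall argument. First I would start from (\ref{eq-52}), noting that the left-hand side controls exactly the quantity appearing on the left of (\ref{eq-56}): the term $\zeta_h^{l+1}$ gives $\tfrac12[\mu\|\varepsilon(\Theta_u^{(l+1)m})\|^2 + \kappa_2\|\Theta_\eta^{(l+\theta)m}\|^2 + \kappa_3\|\Theta_\xi^{(l+1)m}\|^2]$, and since this identity holds for every $l$, taking the maximum over $l$ is legitimate; the dissipation term $m\Delta t\sum_{n=0}^l \tfrac{K}{\mu_f}(\nabla\Phi_p^{(n+1)m}-K\rho_f\mathbf g,\nabla\Phi_p^{(n+1)m})$ provides, after absorbing the $\rho_f\mathbf g$ contribution into the right side via Young's inequality and using the uniform positive-definiteness $K_1|\zeta|^2\le K(x)\zeta\cdot\zeta$, a lower bound of the form $\tfrac{K_1}{2\mu_f}\,m\Delta t\sum_{n=0}^l\|\nabla\Phi_p^{(n+1)m}\|_{L^2(\Omega)}^2$. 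The two genuinely positive backward-difference terms $\tfrac{(m\Delta t)^2}{2}(\mu\|d_t^{(m)}\varepsilon(\Theta_u^{(n+1)m})\|^2 + \kappa_3\|d_t^{(m)}\Theta_\xi^{(n+1)m}\|^2)$ and $\tfrac{\kappa_2\Delta t}{2}m\Delta t\sum\|d_t^{(m)}\Theta_\eta^{(n+\theta)m}\|^2$ are kept on the left; they are precisely what will let me absorb the dangerous terms $E_1$, $E_5$, $E_7$.

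Next I would estimate the terms one by one. For $E_2$ I would use Cauchy–Schwarz and the projection bound (\ref{eq-51}) on $d_t^{(m)}\Lambda_\xi^{(n+1)m}$ (which is $\mathcal Q_h$-error of a difference quotient of $\xi$, hence bounded by $Ch^2\|\xi_t\|_{H^2}$ up to the averaging over $[t_{nm},t_{(n+1)m}]$), combined with Young's inequality to split off $\varepsilon m\Delta t\sum\kappa_3\|\Theta_\xi^{(n+1)m}\|^2$ which is then handled by discrete Gronwall. For $E_3$ I would write $R_h^{(n+\theta)m}$ using (\ref{eq200121-31}) as a second-order Taylor remainder, bound it in $H^{-1}(\Omega)$ by $Cm\Delta t\|\eta_{tt}\|_{L^2(t_{nm},t_{(n+1)m};H^{-1})}$, and pair it against $\nabla\Phi_p^{(n+1)m}$, absorbing $\varepsilon\,m\Delta t\sum\|\nabla\Phi_p^{(n+1)m}\|^2$ into the dissipation; this is the source of the $C_1(T)m\Delta t$ term. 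For $E_4$ (only present when $\theta=0$) the factor $(m\Delta t)^2\sum d_t^{2(m)}\eta(t_{(n+1)m})$ gives another $O(m\Delta t)$ contribution after summation by parts in $n$ and a bound on the discrete second derivative of $\eta$ by $\|\eta_{tt}\|_{L^2(L^2)}$; pairing with $\Theta_\xi$ again feeds Gronwall. For $E_5$, I would split $\Psi_p^{(n+1)m}-\Lambda_p^{(n+1)m}$ and estimate it by $Ch^2(\|\xi\|_{H^2}+\|\eta\|_{H^2})$ type bounds via (\ref{eq-49}) and (\ref{eq-51}), then pair with $d_t^{(m)}\Theta_\eta^{(n+\theta)m}$ and absorb the latter into the $\tfrac{\kappa_2\Delta t}{2}$-term kept on the left — this is where the $O(h^2)$ part of $C_2(T)$ is born and why a $\Delta t$ may appear multiplying it. For $E_6$ I would bound $d_t^{(m)}\nabla\Lambda_\xi^{(n+1)m}$ by $Ch^2\|\xi_t\|_{H^2}$ and $\nabla\Phi_p^{(n+1)m}$ goes into the dissipation; the prefactor $(m\Delta t)^2$ yields a harmless $O(m\Delta t\, h^2)$ term.

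The main obstacle, and the step I expect to require the hypothesis $\Delta t=O(h^2)$ when $\theta=0$, will be the two terms $E_1$ and $E_7$. In $E_1$ the factor $\mathrm{div}(d_t^{(m)}\Theta_u^{(n+1)m})$ cannot be controlled by an a priori bounded quantity directly; I would instead perform summation by parts in $n$ to move $d_t^{(m)}$ off $\Theta_u$ onto $\Lambda_\xi$, producing a telescoping boundary term (bounded by $\zeta_h^{l+1}$ and $\zeta_h^0$, absorbed with a small coefficient) plus a sum $m\Delta t\sum(d_t^{(m)}\Lambda_\xi^{(n)m},\mathrm{div}\,\Theta_u^{(n+1)m})$ that is $O(h^2)$ times $\|\Theta_u\|$; alternatively, using the inf-sup condition (\ref{eq-23}) one trades $\mathrm{div}\,\Theta_u$ for $\|\Theta_\xi\|$. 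The term $E_7$ is the truly delicate one because it contains $d_t^{(m)}\nabla\Theta_\xi^{(n+1)m}$ paired with $\nabla\Phi_p^{(n+1)m}$, i.e. a product of two unknown gradient quantities with only an $(m\Delta t)^2$ prefactor; using Cauchy–Schwarz gives $(m\Delta t)^2\sum\|d_t^{(m)}\nabla\Theta_\xi\|\,\|\nabla\Phi_p\|$, and since no $H^1$-control of $\Theta_\xi$ sits on the left, I would invoke an inverse inequality $\|\nabla\Theta_\xi\|\le Ch^{-1}\|\Theta_\xi\|$ (valid on the quasi-uniform mesh $\mathcal T_h$), converting the prefactor to $(m\Delta t)^2 h^{-1}$ and then, after Young, to $m\Delta t\sum[\varepsilon\|\nabla\Phi_p\|^2 + C(m\Delta t)^2 h^{-2}\|d_t^{(m)}\Theta_\xi\|^2]$; the second piece is absorbed into the left-hand $\tfrac{(m\Delta t)^2}{2}\kappa_3\|d_t^{(m)}\Theta_\xi^{(n+1)m}\|^2$ precisely when $(m\Delta t)^2 h^{-2}$ is small, i.e. under $\Delta t=O(h^2)$. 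With all $E_i$ estimated, collecting the absorbed terms on the left and applying the discrete Gronwall inequality to the residual $m\Delta t\sum_{n=0}^l(\zeta_h^{n+1}+\dots)$ yields (\ref{eq-56}) with the stated constants $C_1(T)$, $C_2(T)$; when $\theta=1$ the terms $E_4,E_6,E_7$ vanish and no CFL-type restriction is needed, which is why the hypothesis reads $\Delta t>0$ in that case.
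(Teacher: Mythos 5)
Your proposal is correct and follows essentially the same route as the paper: start from the energy identity of Lemma \ref{lem200121-1} with $\Theta_u^0=\Theta_\xi^0=0$, bound $E_1$--$E_7$ term by term (summation by parts for $E_1$ and $E_4$, the $H^{-1}$ bound on $R_h$ for $E_3$, projection estimates for $E_2,E_5,E_6$), treat $E_7$ with an inverse inequality whose absorption forces the restriction $\Delta t=O(h^2)$ when $\theta=0$, and conclude via the projection estimates (\ref{eq-48})--(\ref{eq-51}). The only local deviation is that you absorb the $E_7$ remainder into the $\kappa_3\|d_t^{(m)}\Theta_\xi\|^2$ term (so your hidden constant degenerates as $c_0\to 0$), whereas the paper uses the inf-sup constant $\beta_1$ to replace $\|d_t^{(m)}\Theta_\xi\|$ by $\|d_t^{(m)}\varepsilon(\Theta_u)\|$ plus projection error and absorbs into the $\mu$-term and the dissipation, which keeps the time-step condition independent of $c_0$; under the stated hypotheses both variants yield (\ref{eq-56}).
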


\begin{proof} 
Using Lemma \ref{lem200121-1} and the fact of $\Theta_{u}^{0}=0, ~\Theta_{\xi}^{0}=0$,
we have
\begin{eqnarray}
&&\zeta_h^{l+1} +m\Delta t\sum\limits_{n=0}^l[\frac{K}{\mu_f}(\nabla \Phi_{p}^{(n+1)m}-\rho_{f}\mathbf{g}, \nabla \Phi_{p}^{(n+1)m}) +\frac{\kappa_{2}\Delta t}{2}\|d_t \Theta_\eta^{(n+\theta)m}\|_{L^{2}(\Omega)}^2]\nonumber\\
&&+m\Delta t\sum_{n=0}^l\frac{m\Delta t}{2}(\mu\|d_t^{(m)} \varepsilon(\Theta_u^{(n+1)m})\|_{L^{2}(\Omega)}^2+\kappa_3 \|d_t^{(m)} \Theta_\xi^{(n+1)m}\|_{L^{2}(\Omega)}^2 )   \nonumber\\
&&=E_1+E_2+E_3+E_4+E_5+E_6+E_7. \label{eq-57}
\end{eqnarray}

Next, we estimate each term on the right-hand side of (\ref{eq-57}). To bound $E_{1}$, we recall the Korn's inequality:
\begin{eqnarray}
\|\mathrm{{\rm div}} \Theta_u^{(n+1)m}\|_{L^{2}(\Omega)}^2\leq C\|\varepsilon(\Theta_u^{(n+1)m})\|_{L^{2}(\Omega)}^2.\label{eq200122-1}
\end{eqnarray}

Using the Cauchy-Schwarz inequality and (\ref{eq200122-1}), we obtain
\begin{eqnarray}
&&E_{1}
\leq\frac{1}{2}\|\Lambda_\xi^{(l+1)m}\|_{L^{2}(\Omega)}^2+\frac{1}{2}\|{\rm{{\rm div}}} \Theta_u^{(l+1)m}\|_{L^{2}(\Omega)}^2+\frac{1}{2}m\Delta t\sum\limits_{n=1}^l[\|d_t \Lambda_\xi^{(n+1)m}\|_{L^{2}(\Omega)}^2 \nonumber\\
&&+C\|\varepsilon(\Theta_u^{nm})\|_{L^{2}(\Omega)}^2 +\|{\rm div}d_t^{(m)} \Lambda_u^{(n+1)m}\|_{L^{2}(\Omega)}^2+\|\Theta_\xi^{(n+1)m}\|_{L^{2}(\Omega)}^2].\label{eq-58}
\end{eqnarray}

Using the Cauchy-Schwarz inequality and Young inequality, we get
\begin{eqnarray}
E_{2}\leq \frac{k_3}{2}m\Delta t\sum\limits_{n=0}^l(\|d_t^{(m)} \Lambda_\xi^{(n+1)m}\|_{L^{2}(\Omega)}^2+\|\Theta_\xi^{(n+1)m}\|_{L^{2}(\Omega)}^2).\label{eq-59}
\end{eqnarray}


Using the fact that
$$\|R_h^{(n+\theta)m}\|_{H^{-1}(\Omega)}^2\leq\frac{m\Delta  t}{3}\int_{t_{nm}}^{t_{(n+\theta)m}}\|\eta_{tt}\|_{H^{-1}(\Omega)}dt,$$

we can bound  $E_{3}$ as follows:
\begin{eqnarray}\label{eq-61}
&&E_{3}\leq m\Delta t\sum\limits_{n=0}^l\|R_h^{(n+\theta)m}
\|_{H^{-1}(\Omega)}\|\nabla \Phi_p^{(n+1)m}\|_{L^{2}(\Omega)} \nonumber\\
&&\leq m\Delta t\sum\limits_{n=0}^l(\frac{K}{4\mu_f}\|\nabla \Phi_p^{(n+1)m}\|_{L^{2}(\Omega)}^2+\frac{\mu_f m\Delta t}{3K_{1}}\|\eta_{tt}\|_{L^2((t_{nm},t_{(n+1)m});H^{-1}(\Omega))}^2).
\end{eqnarray}

When~$\theta=0$, using the summation by parts formula and $d_t^{(m)}\eta_h(t_0)=0$ to bound $E_{4}$, we have
\begin{eqnarray}
E_{4}=k_1m\Delta t(d_t^{(m)} \eta(t_{(l+1)m}), \Theta_\xi^{(l+1)m})-k_1(m\Delta t)^2\sum\limits_{n=1}^l(d_t^{(m)} \eta(t_{nm}), d_t^{(m)} \Theta_\xi^{(n+1)m}).\label{eq-62}
\end{eqnarray}

Using (\ref{eq-62}), the Cauchy-Schwarz inequality and the Young inequality, we get
\begin{eqnarray}
&&\kappa_{1}m\Delta t(d_t^{(m)} \eta(t_{(l+1)m}),\Theta_\xi^{(l+1)m})\leq \kappa_{1}m\Delta t\|d_t^{(m)}\eta(t_{(l+1)m})\|_{L^{2}(\Omega)}\|\Theta_\xi^{(l+1)m}\|_{L^{2}(\Omega)} \nonumber\\
&&\leq\frac{C\kappa_{1}\mu}{\beta_{1}^2}m(\Delta t)^2 \|\eta_t\|_{L^2((t_{lm},t_{(l+1)m});L^2(\Omega))}^2 +\frac{C\kappa_{1}}{\beta_{1}^2}m(\Delta t)^2\|\Lambda_\xi^{(l+1)m}\|_{L^2(\Omega)}^2\nonumber\\
&&+\frac{\kappa_{1}m\mu}{4}\| \varepsilon(\Theta_{u}^{(l+1)m})\|_{L^2(\Omega)}^2,\label{eq-63}\\
&&\sum\limits_{n=1}^l(d_t^{(m)} \eta(t_{nm}), d_t^{(m)} \Theta_\xi^{(n+1)m})\leq\sum\limits_{n=1}^l\|d_t^{(m)} \eta(t_{nm})\|_{L^{2}(\Omega)}\|d_t^{(m)} \Theta_\xi^{(n+1)m}\|_{L^{2}(\Omega)} \nonumber\\
&&\leq \sum\limits_{n=1}^{l}
[\frac{\mu}{4}\|d_t^{(m)}\varepsilon(\Theta_{u}^{(n+1)m})\|_{L^{2}(\Omega)}^2+\frac{C}{\beta_{1}^{2}} \|d_t^{(m)} \Lambda_{\xi}^{(n+1)m}\|_{L^{2}(\Omega)}^2]\nonumber\\
&&+\frac{C\mu}{\beta_{1}^{2}}\|\eta_t\|_{L^2((0,T);L^2(\Omega))}^2.\label{eq-64}
\end{eqnarray}

We can bound  $E_{5}$ as follows:
\begin{eqnarray}\label{51201}
&&E_{5}= m\Delta t\sum_{n=0}^l(d_t^{(m)}\Theta_\eta^{(n+\theta)m},\Psi_{p}^{(n+1)m}-\Lambda_{p}^{(n+1)m})\\
&&~~~~\leq\frac{m}{2}\Delta t\sum_{n=1}^l\sum\limits_{k=1}^m\| d_t^{(m)}\Theta_\eta^{(n+\theta)m}\|_{L^2(\Omega)}^2 +\frac{m}{2}\Delta t\sum_{n=0}^l(\|\Psi_{p}^{(n+1)m} \|_{L^2(\Omega)}^2+\| \Lambda_{p}^{(n+1)m}\|_{L^2(\Omega)}^2).\nonumber
\end{eqnarray}

We can bound  $E_{6}$ as follows:
\begin{eqnarray}
&&~~~~~~~~~~~~~~~E_{6}=(m\Delta t)^{2}\sum\limits_{n=0}^{l}\frac{K k_{1}}{\mu_{f}}(d_{t}^{(m)} \nabla \Lambda_{\xi}^{(n+1)m}, \nabla \Phi_{p}^{(n+1)m})\\
&&~~~~~~\leq\frac{1}{2}(m\Delta t)^2 \sum_{n=0}^l \frac{K\kappa_1 }{\mu_f}\| d_{t}^{(m)} \nabla \Lambda_{\xi}^{(n+1)m}\|_{L^2(\Omega)}^2+\frac{1}{2}(m\Delta t)^2 \sum_{n=0}^l \frac{K\kappa_1 }{\mu_f}(\| \nabla \Phi_{p}^{(n+1)m} \|_{L^2(\Omega)}^2.\nonumber
\end{eqnarray}

When~$\theta=0$, we can bound  $E_{7}$ as follows:
\begin{eqnarray}
&&\sum\limits_{n=0}^l(d_t^{(m)}\nabla\Theta_\xi^{(n+1)m}, \nabla \Phi_p^{(n+1)m})\nonumber\\
&&\leq
ch^{-1}\sum\limits_{n=0}^l
\|d_t^{(m)}\Theta_\xi^{(n+1)m}\|_{L^{2}(\Omega)}\|\nabla \Phi_p^{(n+1)m}\|_{L^{2}(\Omega)} \nonumber\\
&&\leq\sum\limits_{n=0}^l(\frac{\mu^{2}\kappa_{1}\Delta t}{\beta_1^{2} h^2 }\|d_t^{(m)}\varepsilon(\Theta_{u}^{(n+1)m})\|_{L^{2}(\Omega)}^2+\frac{\kappa_1\Delta t}{h^2\beta_{1}^2}\|d_t^{(m)}\Lambda_{\xi}^{(n+1)m}\|_{L^{2}(\Omega)}^2)\nonumber\\
&&+\sum\limits_{n=0}^l\frac{1}{4\Delta t \kappa_{1}}\|\nabla\Phi_p^{(n+1)m}\|_{L^{2}(\Omega)}^2. \label{eq-65}
\end{eqnarray}

Substituting (\ref{eq-56})-(\ref{eq-65}) into (\ref{eq-57}), we obtain
\begin{eqnarray}
&&\mu\|\varepsilon(\Theta_u^{(l+1)m})\|_{L^{2}(\Omega)}^2+\kappa_2\|\Theta_\eta^{(l+\theta)m}\|_{L^{2}(\Omega)}^2+\kappa_3\|\Theta_\xi^{(l+1)m}\|_{L^{2}(\Omega)}^2\nonumber\\
&&\qquad+ m\Delta t\sum\limits_{n=0}^l\frac{K}{\mu_f}\|\nabla \Phi_p^{(n+1)m}\|_{L^{2}(\Omega)}^2\nonumber\\
&&\leq m(\frac{1}{2}+\frac{C\kappa_{1}}{\beta_{1}^{2}})\| \Lambda_\xi^{(l+1)m}\|_{L^{2}(\Omega)}^2 +\frac{m^{2}}{\mu}\Delta t\sum\limits_{n=0}^l\|d_t^{(m)} \Lambda_\xi^{(n+1)m}\|_{L^{2}(\Omega)}^2\nonumber\\
&&+\frac{1}{2}m \Delta t\sum\limits_{n=0}^l\|\mathrm{{\rm div} }d_t^{(m)} \Lambda_u^{(n+1)m}\|_{L^{2}(\Omega)}^2+(1-\theta)\frac{m^2\mu^{2}\kappa_{1}^{2}K\Delta t}{\mu_{f}\beta_{1}^{2}h^{2}}\|\varepsilon(\Theta_u^{(l+1)m})\|_{L^{2}(\Omega)}^2 \nonumber\\
&&+(\Delta t)^2(\frac{\mu m^{2}}{\beta_{1}}+\frac{\kappa_{1}\mu m}{\beta_{1}^{2}})\|\eta_{t}\|_{L^2((0,T);L^{2}(\Omega))}^2
+\frac{\mu_{f}m\Delta t^{2}}{3K_{1}}\|\eta_{tt}\|_{L^2((0,T);H^{1}(\Omega)')}^{2}\nonumber\\
&&+\frac{m}{2}\Delta t\sum_{n=1}^l\|\Psi_{p}^{(n+1)m} \|_{L^2(\Omega)}^2+\frac{m}{2}\Delta t\sum_{n=1}^l\| \Lambda_{p}^{(n+1)m}\|_{L^2(\Omega)}^2\nonumber\\
&&\leq Cm(\Delta t)^2(\|\eta_{t}\|_{L^2((0,T);L^{2}(\Omega))}^2+\|\eta_{tt}\|_{L^2((0,T);H^{1}(\Omega)')}^{2})\nonumber\\
&&+Cmh^{4}[\|\xi\|_{L^{\infty}((0,T);H^{2}(\Omega))}^{2}+\|\xi_{t}\|_{L^2((0,T);H^{2}(\Omega))}^{2}+\|\mathrm{{\rm div}}\mathbf{u}_{t}\|_{L^{2}((0,T);H^{2}(\Omega))}^{2}].\label{eq-66}
\end{eqnarray}
In (\ref{eq-66}), if $\frac{m^{2}\mu\kappa_{1}^{2}K_{1}\Delta t}{\mu_{f}\beta_{1}^{2}h^{2}}<1$, we can bound the term of $\|\varepsilon(\Theta_u^{(l+1)m})\|_{L^{2}(\Omega)}^2$, so we need the condition:
$\Delta t \leq\frac{\mu_{f}\beta_{1}^{2}h^{2}}{\mu \kappa_{1}^{2}Km^{2}}$. Thus, using~(\ref{eq-48}),~(\ref{eq-49})~,~(\ref{eq-51}), and~(\ref{eq-66}), if\ $\theta=0$, $\Delta t \leq\frac{\mu_{f}\beta_{1}^{2}h^{2}}{\mu \kappa_{1}^{2}Km^{2}}$; when\ $\theta=1$, $\Delta t>0$, we see that~(\ref{eq-56})~holds. The proof is finished.
\end{proof}

\begin{thrm}
Suppose that  $(\mathbf{u}, \xi, \eta)$ and $({\textbf{u}_{h}^{(n+1)m}}, {\xi_{h}^{(n+1)m}}, {\eta_{h}^{(n+\theta)m}})$  are the solutions of the problem (\ref{eq-15})-(\ref{eq-20}) and of the problem (\ref{eq-28})-(\ref{eq-31}), respectively, then we have the following error estimates:
\begin{eqnarray}
&& \max\limits_{0\leq n\leq N}[\sqrt{\mu}\|\varepsilon(\mathbf{u}(t_{nm})-\mathbf{u}_h^{nm})\|_{L^{2}(\Omega)} + \sqrt{\kappa_2}\|\eta(t_{nm})-\eta_h^{nm}\|_{L^{2}(\Omega)} \nonumber\\
&&+ \sqrt{\kappa_3}\|\xi(t_{nm})-\xi_h^{nm}\|_{L^{2}(\Omega)}] \leq \widetilde{C}_1(T)m\Delta t+\widetilde{C}_2(T)mh^2, \label{eq-67}\\
&&[\Delta t\sum\limits_{n=0}^l\sum\limits_{k=1}^m \frac{K}{\mu_f}\|\nabla(p(t_{nm+k})-p_h^{nm+k})\|_{L^{2}(\Omega)}^2]^{1/2}\leq \widetilde{C}_1(T)m\Delta t+\widetilde{C}_2(T)m h,\label{eq-68}
\end{eqnarray}
provided that\ $\Delta t=O(h^2)$ when\ $\theta=0$;\ $\triangle t>0$ when\ $\theta=1$. \ $\widetilde{C}_1(T):~=C_1(T),~\widetilde{C}_2(T):~=C_2(T)+\|\eta\|_{L^{\infty}((0,T); H^{2}(\Omega))}+\|\nabla \mathbf{u}\|_{L^{\infty}((0,T);H^{2})}+\|\xi\|_{L^{\infty}((0,T); H^{2}(\Omega))}$.
\end{thrm}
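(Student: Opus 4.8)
The plan is to derive (\ref{eq-67})--(\ref{eq-68}) from Theorem~\ref{th1912-10} together with the projection estimates (\ref{eq-48})--(\ref{eq-51}), in the same spirit as the corresponding bound of \cite{X.B.Feng2014}. First I would use the error decompositions introduced just before Lemma~\ref{lem200121-1}, namely $E_{\mathbf{u}}^{nm}=\Lambda_u^{nm}+\Theta_u^{nm}$, $E_\xi^{nm}=\Lambda_\xi^{nm}+\Theta_\xi^{nm}$, $E_\eta^{nm}=\Lambda_\eta^{nm}+\Theta_\eta^{nm}$ and $E_p^{nm+k}=\Psi_p^{nm+k}+\Phi_p^{nm+k}$, and apply the triangle inequality, so that it suffices to bound separately the projection parts $\Lambda_\bullet,\Psi_p$ and the discrete parts $\Theta_\bullet,\Phi_p$. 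The discrete parts are handled by Theorem~\ref{th1912-10}: the left-hand side of (\ref{eq-56}) is precisely $\max_{0\le n\le l}[\sqrt{\mu}\|\varepsilon(\Theta_u^{(n+1)m})\|_{L^2(\Omega)}+\sqrt{\kappa_2}\|\Theta_\eta^{(n+\theta)m}\|_{L^2(\Omega)}+\sqrt{\kappa_3}\|\Theta_\xi^{(n+1)m}\|_{L^2(\Omega)}]$ plus the weighted $\ell^2$-in-time norm of $\nabla\Phi_p^{(n+1)m}$, and it is $\le C_1(T)m\Delta t+C_2(T)mh^2$ under the stated step restriction; since $\Theta_u^0=\Theta_\xi^0=\Theta_\eta^0=0$ (the $L^2$-projection used in step~(i) commutes with the linear combinations defining $\xi_h^0,\eta_h^0$), a shift $n\mapsto n-1$ converts these into bounds for all the coarse-node quantities $\Theta_\bullet^{nm}$ in (\ref{eq-67}).

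For the projection parts I would invoke (\ref{eq-48}) with $s=2$ to get $\|\varepsilon(\Lambda_u^{nm})\|_{L^2(\Omega)}\le Ch^2\|\mathbf{u}(t_{nm})\|_{H^3(\Omega)}$, and (\ref{eq-51}) with $s=1$ to get $\|\Lambda_\xi^{nm}\|_{L^2(\Omega)}+\|\Lambda_\eta^{nm}\|_{L^2(\Omega)}\le Ch^2(\|\xi(t_{nm})\|_{H^2(\Omega)}+\|\eta(t_{nm})\|_{H^2(\Omega)})$; taking the maximum over $n$, bounding the $H^3$ and $H^2$ norms by $\|\nabla\mathbf{u}\|_{L^\infty(0,T;H^2)}$, $\|\xi\|_{L^\infty(0,T;H^2)}$, $\|\eta\|_{L^\infty(0,T;H^2)}$, and combining with the previous step, the new data norms are absorbed into $\widetilde{C}_2(T)$ and (\ref{eq-67}) follows. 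For (\ref{eq-68}) I would additionally split $\nabla E_p^{nm+k}=\nabla\Psi_p^{nm+k}+\nabla\Phi_p^{nm+k}$; by (\ref{eq-49}) with $s=1$, $\|\nabla\Psi_p^{nm+k}\|_{L^2(\Omega)}\le Ch\|p(t_{nm+k})\|_{H^2(\Omega)}$, so after multiplying by $\Delta t$ and summing this term contributes $Ch\|p\|_{L^2(0,T;H^2)}$ — which is exactly why (\ref{eq-68}) carries one power of $h$ rather than $h^2$ — while the $\Phi_p$ part is already controlled by (\ref{eq-56}).

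The step I expect to be the main obstacle is the passage between the coarse and the fine time grids. Theorem~\ref{th1912-10} controls $\nabla\Phi_p$ only at the coarse nodes $t_{(n+1)m}$ (with weight $m\Delta t$), whereas the left-hand side of (\ref{eq-68}) is the weighted sum over all fine nodes $t_{nm+k}$, $k=1,\dots,m$, and likewise (\ref{eq-67}) is asked at every coarse node, not only those indexed in (\ref{eq-56}). To close this gap one must either re-run the stability/absorption argument of Lemma~\ref{lem200121-1} and Theorem~\ref{th1912-10} keeping the fine-scale dissipation terms $\frac{\kappa_2\Delta t}{2}\|d_t\Theta_\eta^{(n+\theta)m}\|_{L^2(\Omega)}^2$ and testing the Step~2 error equation (\ref{eq-55}) against $\Phi_p^{nm+k}$ for each $k$, or use the difference-quotient bounds already encoded in $S_{h,\theta}^{l}$ to estimate $\|\nabla\Phi_p^{nm+k}-\nabla\Phi_p^{(n+1)m}\|_{L^2(\Omega)}$ by $O(m\Delta t)$-type quantities, so that the fine sum is dominated by the coarse one up to lower-order terms. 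Everything else is routine: matching the Sobolev regularity of $\mathbf{u},\xi,\eta,p$ (implicit in the finiteness of $C_1(T),C_2(T)$ and $\widetilde{C}_2(T)$) so that the powers $h^{s+1},h^{s}$ in (\ref{eq-48})--(\ref{eq-51}) give $O(h^2)$ in the $L^2$-type norms and $O(h)$ in the $H^1$-seminorm, and observing that the restriction $\Delta t\le\mu_f\beta_1^2h^2/(\mu\kappa_1^2Km^2)$ for $\theta=0$ is precisely what made the $\|\varepsilon(\Theta_u^{(l+1)m})\|$ term absorbable in Theorem~\ref{th1912-10}.
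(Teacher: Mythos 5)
Your proposal matches the paper's own proof, which is exactly the one-line argument you describe: split each error via the decompositions listed before Lemma \ref{lem200121-1}, bound the discrete parts ($\Theta_\bullet$, $\Phi_p$) by Theorem \ref{th1912-10} and the projection parts ($\Lambda_\bullet$, $\Psi_p$) by the estimates (\ref{eq-49})--(\ref{eq-51}), and conclude with the triangle inequality. The coarse-versus-fine-node issue you flag for (\ref{eq-68}) is a legitimate concern, but the paper does not address it either --- its proof simply cites Theorem \ref{th1912-10}, which controls $\nabla\Phi_p$ only at the coarse nodes $t_{(n+1)m}$ --- so your extra caution goes beyond, rather than conflicts with, the published argument.
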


\begin{proof}
The assertions follow easily from  a straightforward use of the triangle inequality on
\[
\mathbf{u}(t_{nm})-\mathbf{u}_h^{nm}=\Lambda_u^{nm}+\Theta_u^{nm}, \quad  \xi(t_{nm})-\xi_h^{nm}=\Lambda_\xi^{nm}+\Theta_\xi^{nm},
\]
\[
\eta(t_{nm})-\eta_h^{nm}=\Lambda_\eta^{nm}+\Theta_\eta^{nm}, \quad  p(t_{nm+k})-p_h^{nm+k}=\Lambda_p^{nm+k}+\Theta_p^{nm+k},
\]
\[
 p(t_{nm+k})-p_h^{nm+k}=\Psi_p^{nm+k}+\Phi_p^{nm+k},
\]
and applying (\ref{eq-49})-(\ref{eq-51}) and Theorem \ref{th1912-10}, we imply that (\ref{eq-67}) and (\ref{eq-68}) hold. The proof is complete.
\end{proof}


\section{ Numerical tests}\label{sec4}
In this section, we will present three two-dimensional numerical experiments to validate theoretical results for the proposed numerical methods, to numerically examine the performances of the approach and methods as well as to compare them with existing methods in the literature on two benchmark problems. The numerical examples show that our approach and numerical methods have a build-in mechanism to prevent the "locking" phenomenon. Also, we denote CR by the shorthand notation of convergence rates.

\textbf{Test 1}. Let \ $\Omega= [0,1]\times[0,1]$,\ $\Gamma_{1} = \{(x,0);~0\leq x\leq1\}$,\ $~\Gamma_{2}= \{(1,y);~0\leq y\leq1 \}$,\ $\Gamma_{3}= \{(x,1);~0\leq x\leq1 \}$,\ $~\Gamma_{4}= \{ (0,y);~0\leq y\leq1 \}$, and $T=1,\Delta t=1e-6$. We consider problem (\ref{eq-6})-(\ref{eq-12}) with the following source functions:
\begin{align*}
\mathbf{f} &=-(\lambda+\mu)t(1,1)^T+\alpha \cos(x+y)e^t(1,1)^T,\\
\phi &=\Bigl(c_0+\frac{2K}{\mu_f} \Bigr)\sin(x+y)e^t+\alpha(x+y),
\end{align*}
and the following boundary and initial conditions:
\begin{eqnarray}
p=\mathrm{sin}(x+y)e^t   &\mathrm{on} & \partial\Omega_T,\nonumber\\
u_1=\frac{1}{2}x^2 t &\mathrm{on}& \Gamma_j\times (0,T),~j=2,~4,\nonumber\\
u_2=\frac{1}{2}y^2t  &\mathrm{on} & \Gamma_j\times(0,T),~j=1,~3, \nonumber\\
\sigma(\mathbf{u})\mathbf{n}-\alpha pI\mathbf{n}=\mathbf{f}_1  &\mathrm{on}& \partial\Omega_T,\nonumber\\
\mathbf{u}(x,0)=\mathbf{0},~ p(x,0)=\mathrm{sin}(x+y) &\mathrm{in}& \Omega,\nonumber
\end{eqnarray}
 where
\begin{align*}
\mathbf{f}_{1}&=\mu (x n_{1},y n_{2})^{T}t+\lambda(x+y)(n_{1},n_{2})^{T} t-\alpha\sin(x+y)(n_{1},n_{2})^{T}e^{t},
\end{align*}
It is easy to check that the exact solution for this problem is
\begin{align*}
\mathbf{u}=\frac{t}{2}(x^{2},y^{2})^{T},\quad p=\sin(x+y)e^t.
\end{align*}

\begin{table}[H]
\centering
\caption{ Physical parameters}\label{table1}
\begin{tabular}{c l c }
\hline
 Parameter   &  \quad Description      &   \quad   Value    \\
\hline
  $\lambda $  &\quad Lam$\acute{e}$ constant      & \quad 1.43e-4\\
  $\mu$       &\quad Lam$\acute{e}$ constant          &  \quad 3.57e-5 \\
  $c_0$       &\quad Constrained specific storage coefficient &  \quad 1e-5 \\
  $\alpha$    &\quad Biot-Willis constant                       &  \quad  0.83 \\
  $K$         &\quad Permeability tensor             &  \quad $(1e-5)I$\\
  $E$         &\quad Young's modulus                          &  \quad  1e-4\\
  $\nu$       &\quad Poisson ratio                             &  \quad  0.4\\
\hline
\end{tabular}
\end{table}

\begin{table}[H]
\begin{center}
\caption{The errors and convergence rates of\ $\mathbf{u}_h^n$ when\ $m=5$}\label{table2}
\begin{tabular}{ccccc}
\hline
$h$& $\|e_u\|_{L^{\infty}(L^2)}$ & CR& $\|e_u\|_{L^{\infty}(H^1)}$& CR\\
\hline
$0.18$ & 0.00106336& & 0.0666679 & \\
$0.09$ &9.00707e-5 &3.5614 & 0.0116539&2.5162 \\
$0.045$ &7.79098e-6&3.5312&  0.00204438&2.5111 \\
$0.0225$ &6.87406e-7& 3.5026&  0.000359823& 2.5063\\
\hline
\end{tabular}
\end{center}
\end{table}

\begin{table}[H]
\begin{center}
\caption{The errors and convergence rates of\ $\mathbf{u}_h^n$ when\ $m=1$ }\label{table3}
\begin{tabular}{ccccc}
\hline
$h$& $\|e_u\|_{L^{\infty}(L^2)}$ & CR & $\|e_u\|_{L^{\infty}(H^1)}$& CR\\
\hline
$0.18$ & 0.00106336& & 0.0666679 & \\
$0.09$ &9.00707e-5&3.5614 & 0.0116539&2.5162 \\
$0.045$ & 7.79083e-6&3.5312&  0.00204438&2.5111 \\
$0.0225$ &6.85585e-7& 3.5064&  0.000359823& 2.5063\\
\hline
\end{tabular}
\end{center}
\end{table}


\begin{figure}[H]
\centering{
\includegraphics[width=8cm]{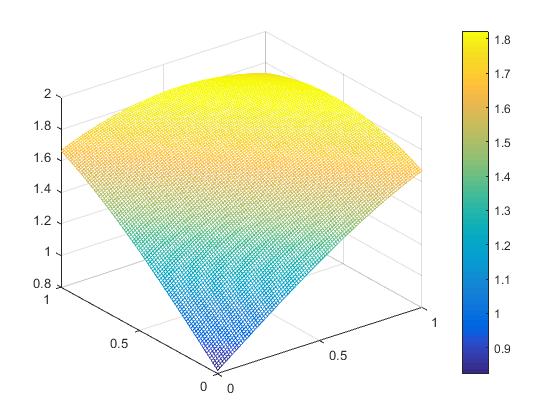}}
\caption{Computed pressure\ $p_{h}^n$ at\ $T=1$ when\ $m=5$.}\label{fig2}
\end{figure}

Table~ \ref{table2} and Table~ \ref{table3} display the errors of displacement $\mathbf{u}_h^n$ in \ $L^{\infty}(0, T; L^2(\Omega))$-norm and $L^{\infty}(0,T; H^1(\Omega))$-norm and the convergence rates with respect to\ $h$ at the terminal time\ $T$ when\ $m=5$ and\ $m=1$, respectively, one can see that the convergence rates is almost identical. However,  the multirate iterative scheme greatly reduces the computational cost, for example, when $m=5,~h=0.18$, the execution time of multirate iterative scheme with multiphysics finite element method is\ $t=6.333s$; when\ $m=1,~h=0.18$, the execution time is\ $t=17.846s$.

If the parameters are same, when\ $m=5,~h=0.18$, the execution time of multirate iterative scheme based on multiphysics discontinuous Galerkin method is\ $t=13.168s$; when\ $m=1,~h=0.18$, the execution time is\ $t=30.302s$. So, we can conclude that the multirate iterative scheme with multiphysics finite element method save a huge computation cost.

Figure \ref{fig2} display the computed pressure \ $p_{h}^n$ at\ $T=1$ when\ $m=5$, from the above two figures, we see that our numerical method has no "locking" phenomenon.

\textbf{Test 2.} In this test, we consider so-called Barry-Mercer's problem, and we set $\Omega=[0,1]\times[0, 1]$ and the boundary segments $\Gamma_{j}, j=1, 2, 3, 4$  are same as  $\textbf{Test 1}$, and $\Delta t=1e-5, T=1$, $\mathbf{f}\equiv0$ and $\phi\equiv0$, and we take the following boundary conditions:
\begin{eqnarray}
p=0  &\mathrm{on}&  \Gamma_j\times(0,T),~j=2,~3,~4,\nonumber\\
p=p_2 &\mathrm{on}&   \Gamma_j\times(0,T),~j=1,\nonumber\\
u_1=0 &\mathrm{on}&   \Gamma_j\times(0,T),~j=2,~4,\nonumber\\
u_2=0 &\mathrm{on}&   \Gamma_j\times(0,T),~j=1,~3,\nonumber\\
\sigma\mathbf{n}-\alpha pI \mathbf{n}=\vec{f}_1:=(0,\alpha p)^T &\mathrm{in}& \partial\Omega_T,\nonumber
\end{eqnarray}
where
\begin{equation}\nonumber
p_2(x,t)=\left\{
\begin{array}{cc}
    \sin t, ~(x, t)\in [0.2, 0.8)\times(0, T),\\
    0,~~~{\rm others}.
\end{array}
    \right.
\end{equation}

\begin{table}[H]
\centering
\caption{Physical parameters}\label{table4}
\begin{tabular}{c l c}
\hline
 Parameter    &  \quad Description     &   \quad  Value    \\
\hline
  $\lambda $  &\quad Lam$\acute{e}$ constant         & \quad 0.0044\\
  $\mu$       &\quad Lam$\acute{e}$ constant         &  \quad 0.0158 \\
  $c_0$       & \quad Constrained specific storage coefficient  &  \quad 0.9 \\
  $\alpha$    &\quad Biot-Willis constant                       &  \quad  0.31 \\
  $K$         &\quad Permeability tensor              &  \quad (3e-6)I\\
  $E$         &\quad Young's modulus                           &  \quad  3.5e-2\\
  $\nu$       &\quad Poisson ratio                           &  \quad  0.11 \\
\hline
\end{tabular}
\end{table}

\begin{table}[H]
\begin{center}
\caption{The errors and convergence rates of\ $\mathbf{u}_h^n$ when\ $m=5$}\label{table5}
\begin{tabular}{ccccc}
\hline
$h$& $\|e_u\|_{L^{\infty}(L^2)}$ & CR& $\|e_u\|_{L^{\infty}(H^1)}$& CR\\
\hline
$0.18$ & 4.7027e-8 & & 4.66882e-7 & \\
$0.09$ & 1.67091e-8 &1.4929 &2.49994e-7& 0.9012\\
$0.045$ & 5.6643e-9 & 1.5607&  1.16283e-7& 1.1043 \\
$0.0225$ &1.44007e-9& 1.9758& 4.8909e-8 & 1.2495\\
\hline
\end{tabular}
\end{center}
\end{table}
\begin{table}[H]
\begin{center}
\caption{The errors and convergence rates of\ $\mathbf{u}_h^n$ when\ $m=1$}\label{table6}
\begin{tabular}{ccccc}
\hline
$h$& $\|e_u\|_{L^{\infty}(L^2)}$ & CR& $\|e_u\|_{L^{\infty}(H^1)}$& CR\\
\hline
$0.18$ & 1.56757e-8 & & 1.55627e-7 & \\
$0.09$ & 5.56969e-9 &1.4929 &8.33314e-8& 0.9012 \\
$0.045$ & 1.8881e-9 & 1.5607&  3.8761e-8& 1.1043 \\
$0.0225$ &4.80022e-10& 1.9758&  1.6303e-8 & 1.2495\\
\hline
\end{tabular}
\end{center}
\end{table}
\begin{figure}[H]
\centering{
\includegraphics[width=8cm]{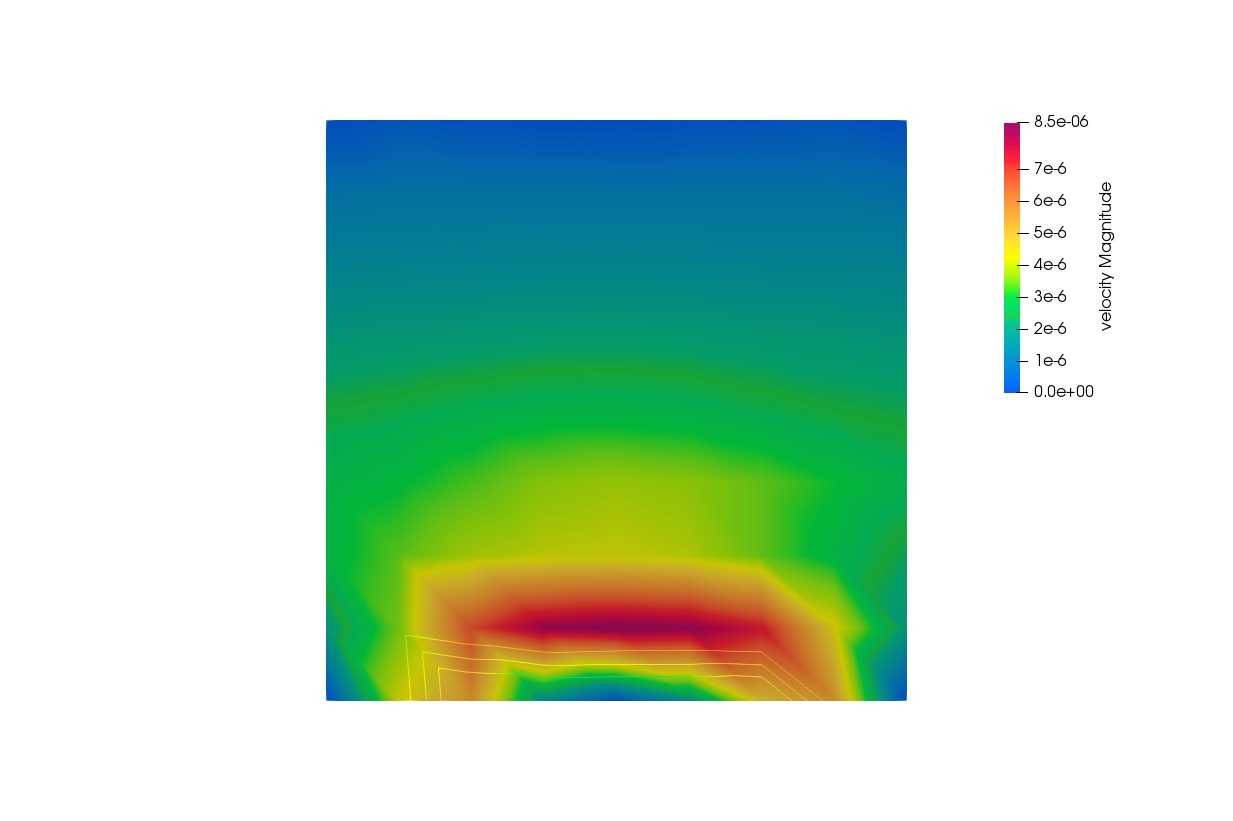}}
\caption{The computed displacement \ $\mathbf{u}_{h}^n$ at\ $T=1$ when\ $m=5$.}\label{fig3}
\end{figure}
\begin{figure}[H]
\centering{
\includegraphics[width=8cm]{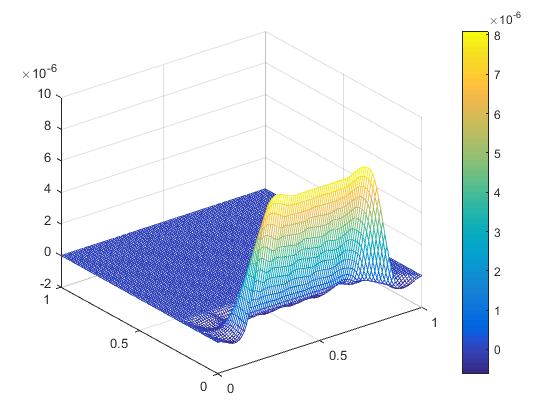}}
\caption {The computed pressure \ $p_{h}^n$ at\ $T=1$ when\ $m=5$.}\label{fig4}
\end{figure}

From Table \ref{table5} and Table \ref{table6}, we know that the errors of displacement \ $\mathbf{u}_h^n$ when \ $m=1$ are better than ones of \ $\mathbf{u}_h^n$ when\ $m=5$, and the convergence rates are identical. However,   the execution times of multirate iterative scheme based on multiphysics finite element method are\ $t=208.977s$ for the case of \ $m=5$ and $t=291.067s$ for the case of \ $m=1$ (when $h=0.18$). Also, if the parameters are same,  the execution time of multirate iterative scheme based on multiphysics discontinuous Galerkin method is\ $t=300.597s$ when\ $m=5,~h=0.18$,  and the execution time is\ $t=372.01s$ when\ $m=1,~h=0.18$. So, we can conclude that the multirate iterative scheme greatly reduces the computational cost.

Figure \ref{fig3}~and Figure \ref{fig4}~display the computed displacement \ $\mathbf{u}_h^n$ and pressure \ $p_{h}^n$ at\ $T=1$ when\ $m=5$, respectively. From the above two figures, we see that there is no "locking" phenomenon.

\textbf{Test 3}. Again, we consider problem with $\Omega=[0,1]\times[0,1]$. Let $\Gamma_{j}$ be same as in \textbf{Test 1}, and $\Delta t=1e-5, T=1$. There is no source, that is, $\mathbf{f}\equiv0$ and $\phi\equiv0$. The boundary conditions are taken as
\begin{eqnarray}
-\frac{K}{\mu_{f}} (\nabla p-\rho_{f} \mathbf{g} )\cdot \mathbf{n}=0    &\mathrm{on} & \partial\Omega_{T},\nonumber\\
\mathbf{u}=\mathbf{0} &\mathrm{on}& \Gamma_{3}\times(0,T), \nonumber\\
\sigma \mathbf{n}-\alpha pI \mathbf{n}= \mathbf{f}_{1} &\mathrm{on}& \Gamma_{j}\times(0,T),~j = 1,~2,~3,\nonumber
\end{eqnarray}
where $\mathbf{f}_{1}= (f_{1}^{1}, f_{1}^{2})^T$ and
\begin{eqnarray}
f_{1}^{1}\equiv0&\mathrm{on}&\partial\Omega_{T},\nonumber
\end{eqnarray}
\qquad\qquad\qquad\begin{equation}
 f_{1}^{2}=\left\{
  \begin{array}{c}
         0, ~~~ (x,t)\in\Gamma_{j}\times(0 , T ), j = 1, 2, 4, \\
         -1,~~~(x,t)\in\Gamma_{3}\times(0 , T ).
       \end{array}
       \right.
\end{equation}
The zero initial conditions are assigned for both $\bf{u}$ and $p$ in this test.

\begin{table}[H]
\centering
\caption{Physical parameters}
\begin{tabular}{c l c }
\hline
 Parameter    &  \quad Description     &   \quad  Value    \\
\hline
  $\lambda $  &\quad Lam$\acute{e}$ constant         & \quad 142857.14\\
  $\mu$       &\quad  Lam$\acute{e}$ constant         &  \quad 35714.29 \\
  $c_0$       & \quad Constrained specific storage coefficient  &  \quad 0.01 \\
  $\alpha$    &\quad Biot-Willis constant                       &  \quad  0.93 \\
  $K$         &\quad Permeability tensor              &  \quad (1e-1)I\\
  $E$         &\quad Young's modulus                           &  \quad  1e5\\
  $\nu$       &\quad Poisson ratio                           &  \quad  0.4 \\
\hline
\end{tabular}\label{table7}
\end{table}

\begin{table}[H]
\begin{center}
\caption{The errors and convergence rates of\ $p_h^n$ when\ $m=5$}\label{table8}
\begin{tabular}{ccccc}
\hline
$h$& $\|e_p\|_{L^{2}(L^2)}$ & CR& $\|e_p\|_{L^{2}(H^1)}$& CR\\
\hline
$0.18$ & 1.11026e-9 & & 3.38447e-8 & \\
$0.09$ & 3.286e-10 &1.7565 & 2.17254e-8& 0.6395 \\
$0.045$ &1.04069e-10 & 1.6588&  1.51649e-8& 0.5186 \\
$0.0225$ &2.38175e-11& 2.1274&  6.54142e-9& 1.2131\\
\hline
\end{tabular}
\end{center}
\end{table}
\begin{table}[H]
\begin{center}
\caption{The errors and convergence rates of\ $p_h^n$ when\ $m=1$}\label{table9}
\begin{tabular}{ccccc}
\hline
$h$& $\|e_p\|_{L^{2}(L^2)}$ & CR& $\|e_p\|_{L^{2}(H^1)}$& CR\\
\hline
$0.18$ & 6.38714e-10 & & 1.94327e-8 & \\
$0.09$ &1.91689e-10 &1.7364 & 9.9166e-9& 0.9706 \\
$0.045$ & 6.1088e-11 & 1.6498&  5.00331e-9& 0.9870 \\
$0.0225$ &1.35621e-11& 2.1713&  3.23977e-9& 0.6270\\
\hline
\end{tabular}
\end{center}
\end{table}

\begin{figure}[H]
\centering{
\includegraphics[width=8cm]{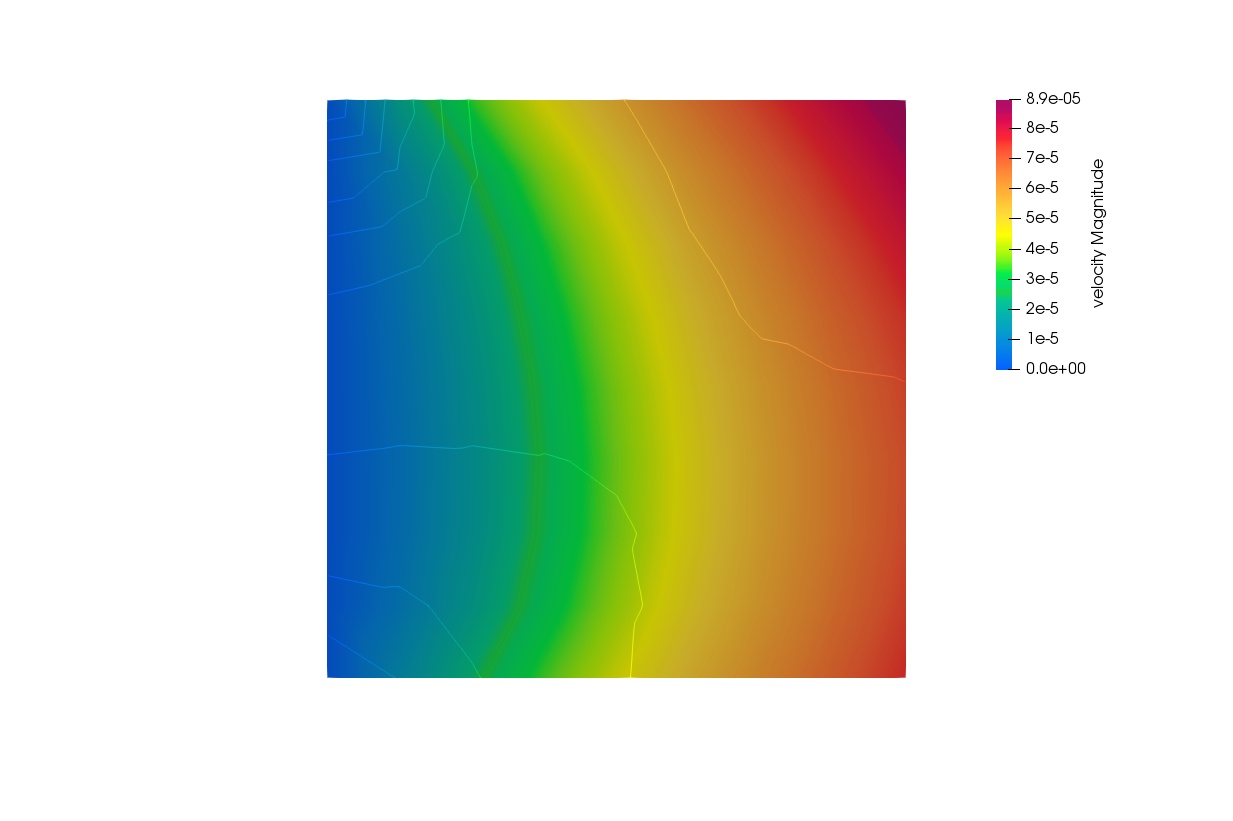}}
\caption{Computed displacement\ $\mathbf{u}_{h}^n$ at\ $T=1$ when\ $m=5$.}\label{fig5}
\end{figure}
\begin{figure}[H]
\centering{
\includegraphics[width=8cm]{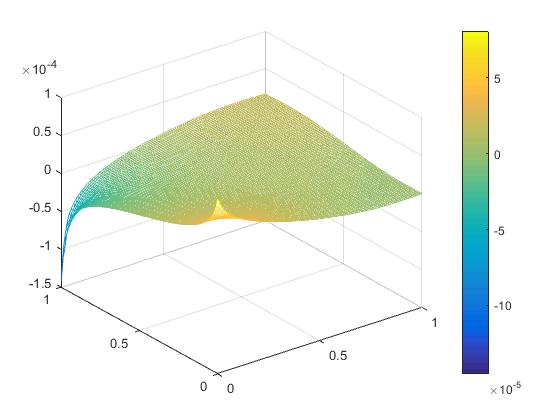}}
\caption{Computed pressure\ $p_{h}^n$ at\ $T=1$ when\ $m=5$.}\label{fig6}
\end{figure}

From Table \ref{table8} ~and Table \ref{table9}, we know that the errors of \ $p_h^n$ in \ $L^{2}(0,T;L^2(\Omega))$-norm and \ $L^{2}(0,T;H^1(\Omega))$-norm when \ $m=1$ are better than ones of $m=5$. However, the convergence rates are almost equal for the cases of \ $m=5$ and\ $m=1$. When\ $m=5,~h=0.18$, the execution time of multirate iterative scheme based on multiphysics finite element method is\ $t=215.692s$; when\ $m=1,~h=0.18$, the execution time is\ $t=282.331s$, if the parameters are same, when\ $m=5,~h=0.18$, the execution time of multirate iterative scheme based on multiphysics discontinuous Galerkin method is\ $t=342.197s$; when\ $m=1,~h=0.18$, the execution time is\ $t=421.611s$. So, we can conclude that the multirate iterative scheme greatly reduces the computational cost. Also, from Figure \ref{fig5}~and Figure \ref{fig6}, we see that the numerical method in this paper has no "locking" phenomenon.

\section{Conclusion}\label{sec5}
In this paper, we propose a multirate iterative scheme with multiphysics finite element method for a poroelasticity model. And we prove that the multirate iterative scheme is stable and the numerical solution satisfies some energy conservation laws, and it doesn't reduce the precision of numerical solution and greatly reduces the computational cost. In the future work, we will apply the proposed approaches to more complex practical problems and nonlinear poroelasticity model.


\begin{thebibliography}{99}

\bibitem{M.Doi2010}
M.~Doi,~S.F.~Edwards.~The theory of polymer dynamics.~Acta Polymerica,~2010,~38(8):~508-508.

\bibitem{T.Tanaka1979}
T.~Tanaka,~D.J.~Fillmore.~Kinetics of swelling of gels.~The Journal of Chemical Physics,~1979,~70(3):~1214-1218.

\bibitem{K.Terzaghi1943}
K.~Terzaghi.~Theoretical Soil Mechanics.~New York: John Wiley and Sons,~1943.

%

\bibitem{H.Byrne2003}
H.~Byrne,~L.~Preziosi.~Modelling solid tumour growth using the theory of mixtures.~Mathematical Medicine and Biology,~2003,~20(4):~341-366.

\bibitem{C.C.Swan2003}
C.C.~Swan,~R.S.~Lakes,~R.A.~Brand,~K.J.~Stewart.~ Micromechanically based poroelastic modeling of fluid flow in haversian bone.~Journal of Biomechanical Engineering,~2003,~125(1):~25-37.

%
%
%
\bibitem{J.Rutqvist2003}
J.~Rutqvist,~C.F.~Tsang.~Analysis of thermal-hydrologic-mechanical behavior near an emplacement drift at Yucca Mountain.~Journal of Contaminant Hydrology,~2003,~62(1):~637-652.

\bibitem{R.Rajapakse1993}
R.~Rajapakse.~Stress analysis of borehole in poroelastic medium.~Journal of Engineering Mechanics,~1993,~119(6):~1205-1207.

%
%

\bibitem{G.A.Behie2001}
G.A.~Behie,~A.~Settari,~D.A.~Walters.~Use of coupled reservoir and geomechanical modeling for integrated reservoir analysis and management.~Journal of Canadian Petroleum Technology,~2001,~40(40):~55-61.

%
%

\bibitem{P.J.Phillips200711(2):131-144}
P.J.~Phillips,~M.F.~Wheeler.~A coupling of mixed and continuous Galerkin finite element methods for poroelasticity I:~the continuous in time case.~Computational Geosciences,~2007,~11(2):~131-144.

\bibitem{coussy04} O. Coussy.  Poromechanics. Wiley \& Sons, England, 2004.

\bibitem{P.J.Phillips200913(1):5-12}
P.J.~Phillips,~M.F.~Wheeler.~Overcoming the problem of locking in linear elasticity and poroelasticity:~an heuristic approach.~Computational Geosciences,~2009,~13(1):~5-12.

\bibitem{P.J.Phillips200711(2):145-158}
P.J.~Phillips,~M.F.~Wheeler.~A coupling of mixed and continuous Galerkin finite element methods for poroelasticity II:~the discrete in time case.~Computational Geosciences,~2007,~11(2):~145-158.

\bibitem{Hu2017}
X.~Hu, C.~Rodrigo, F.~Gaspar, L.~Zikatanov.
\newblock A nonconforming finite element method for Biot's consolidation model in poroelasticity.
\newblock Journal of Computational and Applied Mathematics, 2017, 310: 143-154.

\bibitem{JJLee2016}
J.~Lee.
\newblock Robust error analysis of coupled mixed methods for Biot's consolidation model.
\newblock Journal of Scientific Computing, 2016, 69: 610-632.

\bibitem{JJLee2018}
J.~Lee.
\newblock Robust three-field finite element methods for Biot's consolidation model in poroelasticity.
\newblock BIT Numerical Mathematics, 2018, 58: 347-372.

\bibitem{X.B.Feng2010}
X.B.~Feng,~Y.N.~He.~Fully discrete finite element approximations of a polymer gel model.~SIAM Journal on Numerical Analysis,~2010,~48(6):~2186-2217.

\bibitem{GeGuan2018}
Z.H. Ge, Z. Guan, Y.N He. Fully Discrete Stabilized Multiphysics Finite Element Method for the Polymer Gel Model.  Computers \& Mathematics with Applications, 2018, 76: 393-405.

\bibitem{X.B.Feng2014}
X.B.~Feng,~Z.H.~Ge,~Y.K.~Li. Analysis of a multiphysics finite element method for a poroelasticity model.~IMA Journal of Numerical Analysis,~2018,~38(1):330-359. arXiv:1411.7464, [math.NA], 2014.

\bibitem{Kumar2}
T. Almani, K. Kumar A. Dogru, G. Singh and M. F. Wheeler.
Convergence analysis of multirate fixed-stress split iterative schemes for coupling flow with geomechanics. Comput. Methods Appl. Mech. Engrg., 2016, 311: 180–207.


\bibitem{Z.H.Ge2018}
Z.H.~Ge,~M.X.~Ma.~Multirate iterative scheme based on multiphysics discontinuous Galerkin method for a poroelasticity model.~Applied Numerical Mathematics,~2018,~128:125-138.







\bibitem{S.C.Brenner2008}
S.C.~Brenner,~L.R.~Scoot.~The Mathematical Theory of Finite Element Methods,~third edition.~New York:~Springer,~2008.



\bibitem{Evans98}
L.~Evans.~Partial Differential Equations.~AMS,~Providence,~Rhode Island,~1998.

\end{thebibliography}
\end{document}